\newcommand{\abs}[1]{\left\lvert #1\right\rvert}
\newcommand{\cM}{\ensuremath{\mathcal{M}}}
\newcommand{\cD}{\ensuremath{\mathcal{D}}}
\newcommand{\se}{\subseteq}		
\newcommand{\sm}{\setminus}		
\newcommand{\re}{\sim_R}
\newcommand{\wre}{\sim_{c}}
\newcommand{\nto}{\ensuremath{\nrightarrow}}
\newtheorem{theorem}{Theorem}
\newtheorem{lemma}{Lemma}
\newtheorem{corollary}{Corollary}
\newtheorem{conjecture}{Conjecture}
\newtheorem{quest}{Question}
\begin{document}

\title{Ramsey equivalence of $K_n$ and $K_n+K_{n-1}$}
\author{%
Thomas F. Bloom} 
\author{
Anita Liebenau}%
\thanks{The first author was supported by the Heilbronn Institute for Mathematical Research. The second author was supported by the European Research Council under the European Union's Seventh Framework Programme (FP7/2007- 2013)/ERC grant agreement no.~259385}
\begin{abstract}
We prove that, for $n\geq 4$, the graphs $K_n$ and $K_n+K_{n-1}$ are Ramsey equivalent. That is, if $G$ is such that any red-blue colouring of its edges creates a monochromatic $K_n$ then it must also possess a monochromatic $K_n+K_{n-1}$. This resolves a conjecture of Szab\'{o}, Zumstein, and Z\"{u}rcher \cite{SzZuZu:2010}. 
\end{abstract}
\address{Thomas F. Bloom\\Heilbronn Institute for Mathematical Research\\Department of Mathematics\\University of Bristol\\
University Walk\\Bristol BS8 1TW\\United Kingdom}
\email{matfb@bristol.ac.uk}
\address{Anita Liebenau\\Department of Computer Science and DIMAP\\University of Warwick\\ Coventry CV4 7AL\\ United Kingdom}
\email{a.liebenau@warwick.ac.uk}
\maketitle

A finite graph $G$ is Ramsey for another finite graph $H$, written $G\to H$, if there is a monochromatic copy of $H$ in every two-colouring of the edges of $G$. We say that $H_1$ and $H_2$ are Ramsey equivalent, written $H_1\re H_2$, if, for any graph $G$, we have $G\to H_1$ if and only if $G\to H_2$. 

The concept of Ramsey equivalence was first introduced by Szab\'{o}, Zumstein, and Z\"{u}rcher \cite{SzZuZu:2010}. A fundamental question to ask is which graphs are Ramsey equivalent to the complete graph $K_n$. It follows from a theorem of Folkman \cite{Fo:1970} that if 
a graph $H$ is Ramsey equivalent to $K_n$, then 
$\omega(H)=n$, where $\omega(H)$ denotes the size of the largest complete subgraph of $H$. 

In a recent paper, Fox, Grinshpun, Person, Szab\'{o}, and the second author \cite{FoGrLiPeSz:2014} showed that $K_n$ is \emph{not} Ramsey equivalent to any connected graph containing $K_n$. 
Furthermore, it is easily seen that $K_n$ is not Ramsey equivalent to $K_n+K_n$\footnote{Throughout the paper, 
we denote by $H_1+H_2$ the graph composed of two vertex-disjoint copies of $H_1$ and $H_2$. 
More generally, we denote by $H_1+ tH_2$ the graph that consists of a copy of $H_1$ and $t$ pairwise vertex-disjoint copies of $H_2$.}, 
see e.g.~\cite{SzZuZu:2010}.  
It follows that if $K_n\re H$ then $H$ is of the form $K_n+H'$ where $\omega(H')<n$. 

It was shown in \cite{SzZuZu:2010} that $K_n\re K_n+ K_{n-2}$. 
However, the question of whether $K_n$ is Ramsey equivalent to $K_n+K_{n-1}$ was left open. It is easily checked that $K_3$ is not Ramsey equivalent to $K_3+K_2$, since $K_6\to K_3$ but $K_6\nto K_3+K_2$. In \cite{SzZuZu:2010} it is conjectured that this is an aberration, and that $K_n\re K_n+K_{n-1}$ for large enough $n$, a conjecture repeated in \cite{FoGrLiPeSz:2014}. In this paper, we prove this conjecture.

\begin{theorem}\label{maintheorem}
For any $n\geq 4$ 
\[K_n \re K_n+K_{n-1}.\]
\end{theorem}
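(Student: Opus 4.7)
The implication $G\to K_n+K_{n-1} \Rightarrow G\to K_n$ is immediate because $K_n\subset K_n+K_{n-1}$; the heart of the theorem is the reverse direction. My plan is to argue by contradiction: fix a $2$-colouring $\chi$ of $E(G)$ with no monochromatic $K_n+K_{n-1}$, and then construct from $\chi$ a $2$-colouring of $G$ that avoids even a monochromatic $K_n$, contradicting $G\to K_n$.

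The starting point is to bootstrap from the earlier result $K_n\re K_n+K_{n-2}$ of Szab\'o, Zumstein, and Z\"urcher: applied to $G$, this forces $\chi$ to contain a monochromatic (say red) $K_n+K_{n-2}$ on disjoint vertex sets $S$ and $T$ with $|S|=n$ and $|T|=n-2$. The assumption that $\chi$ contains no monochromatic $K_n+K_{n-1}$ translates into the structural statement that the red graph induced on $U:=V(G)\setminus S$ is $K_{n-1}$-free; more generally, for every red $K_n$ on a set $S'$, the red graph on $V(G)\setminus S'$ is $K_{n-1}$-free, with the symmetric statement for blue. In particular, every $v\in U\setminus T$ has at most $n-3$ red neighbours in $T$, and pairs of vertices inside $U$ that are jointly connected to many vertices of $S$ in red are severely constrained.

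To exploit this rigidity I would stratify $U$ by red-degree into $S$, paying particular attention to the set $U_n$ of vertices red-complete to $S$. Each $v\in U_n$ together with any $n-1$ vertices of $S$ forms a red $K_n$, forcing the $K_{n-1}$-freeness to apply in the corresponding complement; cascading these constraints pins down a dense blue structure on $U_n\cup T$. The key manoeuvre is then to flip the colours of the edges in $G[S]$ together with a carefully chosen set of $S$--$U_n$ edges. The dense blue structure should guarantee both that every red $K_n$ meeting $S$ is destroyed and that no new blue $K_n$ is created, because the blue pattern on $S\cup U_n$ was already saturated. Any red $K_n$ lying entirely inside $U$ is ruled out by $K_{n-1}$-freeness; monochromatic blue $K_n$s disjoint from $S$ are handled by the symmetric version of the argument with the colours reversed, iterated if necessary.

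The principal obstacle is orchestrating the re-colouring so that killing one colour's cliques does not create new cliques of the other colour, and bounding the number of iterations needed before the process stabilises to a colouring of $G$ with no monochromatic $K_n$. I expect $n=4$ to be the tightest case, since then the forbidden clique is $K_3$ and the $K_{n-1}$-freeness lemma is weakest; the failure of the statement at $n=3$ (witnessed by $K_6\nto K_3+K_2$) confirms that the structural rigidity must be genuinely used, with little slack at $n=4$, so this boundary case may well require a hands-on verification separate from the main argument.
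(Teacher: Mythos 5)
Your overall frame (assume a colouring of $G$ with no monochromatic $K_n+K_{n-1}$, then recolour to destroy every monochromatic $K_n$ and contradict $G\to K_n$) is exactly the paper's, but the core of your argument has a genuine gap, and its centrepiece fails as stated. Flipping the colours of all edges of $G[S]$ turns the red $K_n$ on $S$ into a blue $K_n$ on $S$ immediately, and no choice of $S$--$U_n$ edges can undo that; any workable recolouring must be more selective (the paper, for instance, recolours all but one edge of a red clique and keeps one red edge precisely so that the clique cannot become blue, and then checks that a blue $K_n$ cannot use the recoloured edges at all). More fundamentally, your structural input is one-sided and too weak: $K_{n-1}$-freeness of red outside any red $K_n$, plus red-degree bounds into $T$, does not pin down any ``dense blue structure'' on $U_n\cup T$ (note $U_n$ may be empty), and you have no control over blue $K_n$'s anywhere in $G$. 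The phrase ``handled by the symmetric version, iterated if necessary'' hides the actual difficulty: all your structural facts are derived from the original colouring and do not persist after a recolouring, and no termination or convergence argument is offered, so the iteration is not a proof scheme.

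For comparison, the paper resolves exactly these issues by accumulating, via separate recolouring lemmas each using $G\to K_n$, enough structure about the \emph{original} colouring to make a single, explicitly verifiable recolouring possible: a Ramsey stability lemma (monochromatic $K_{n-1}$'s avoiding a prescribed set of up to $2n-2$ vertices), the existence of both a red and a blue $K_n$, the impossibility of a monochromatic $K_{n+1}$, and the fact that two same-colour $K_n$'s cannot meet in exactly two vertices. The last fact is what makes the final step work: every monochromatic $K_n$ must meet a fixed red clique $V_R$ or blue clique $V_B$ in at least three vertices, so one can choose small disjoint sets $W_R\subset V_R$, $W_B\subset V_B$ hitting every monochromatic $K_n$, and then recolour only edges incident to $W_R\cup W_B$ and check by hand that no monochromatic $K_n$ survives. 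Your proposal has nothing playing the role of this hitting-set property or of the stability lemma, which is why the key step cannot currently be completed; your appeal to $K_n\re K_n+K_{n-2}$ is a legitimate (and different) starting point, but by itself it does not supply the two-colour structure the contradiction requires.
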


It is shown in \cite{FoGrLiPeSz:2014} that this is best possible, in the sense that $K_n$ is not Ramsey equivalent to $K_n+2K_{n-1}$. 
 
Our methods are combinatorial and explicit, and the idea is the following: suppose we have a graph $G$ which is Ramsey for $K_n$, and yet has been coloured so as to avoid a monochromatic $K_n+K_{n-1}$. We will then attempt, by giving an explicit recolouring of some edges, to give a colouring which no longer possesses a monochromatic $K_n$, which contradicts the Ramsey property of $G$. 

Sadly, this is not quite possible directly, and instead we will build up our proof in stages: in each lemma, we will show that either a colouring of $G$ must have a monochromatic $K_n+K_{n-1}$, or if not we can deduce some further structural information about the colouring of $G$, which will help us in the following lemmas. Eventually, we will have accumulated enough information about our supposed counterexample so that it collapses under the weight of contradiction into non-existence, which proves Theorem~\ref{maintheorem}.

As mentioned above, the clique on six vertices is an unfortunate obstruction which prevents the Ramsey equivalence of $K_3$ and $K_3+K_2$. 
Interestingly, Bodkin and Szab\'o \cite{Bo:2015} have shown that, essentially, this is the \emph{only} such obstruction.

\begin{theorem}[\cite{Bo:2015}]\label{carly+tibor}
If $G\to K_3$ and $G\nto K_3+K_2$ then $K_6\se G$. 
\end{theorem}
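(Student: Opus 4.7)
Fix a 2-colouring of $E(G)$ with no monochromatic $K_3+K_2$. Since $G \to K_3$, the colouring contains a monochromatic triangle, which we may assume is red and call $T = \{a,b,c\}$. The first observation is the red incidence property: a red edge disjoint from $T$ would together with $T$ form a red $K_3+K_2$, contradicting the hypothesis. Hence every red edge shares a vertex with $T$, and consequently every edge inside $S := V(G) \sm T$ is blue.

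The heart of the argument is to produce a \emph{blue} triangle $T'$ disjoint from $T$, equivalently a triangle inside $G[S]$. I would argue by contradiction: assuming $G[S]$ is triangle-free, I would construct a 2-colouring of $E(G)$ with no monochromatic triangle, contradicting $G \to K_3$. The construction starts from the given colouring, recolours two edges of $T$ to blue so that $T$ is no longer monochromatic, and then adjusts colours on the $T$-$S$ edges to destroy all surviving monochromatic triangles. Under the assumed triangle-freeness of $G[S]$, surviving mono triangles can only be of two forms: $\{x,y,v\}$ with $x,y \in T$, $v \in S$, or $\{x,v_1,v_2\}$ with $x \in T$, $v_1v_2 \in G[S]$. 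Solving the resulting edge-colouring constraint system on the $T$-$S$ edges is the step I expect to be the main obstacle.

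Once a blue triangle $T' \se S$ is in hand, applying the $K_3+K_2$-free hypothesis to $T'$ yields the symmetric incidence property that every blue edge meets $T'$. Setting $W := V(G) \sm (T \cup T')$, any edge inside $W$ would avoid both $T$ and $T'$ and hence would be neither red nor blue, so $W$ is independent in $G$; moreover every $W$-$T$ edge is red and every $W$-$T'$ edge is blue.

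It remains to show $T \cup T'$ spans a $K_6$, i.e., all nine cross-edges between $T$ and $T'$ are in $G$. If some edge $au_1$ with $a \in T$, $u_1 \in T'$ were missing, then the classical $C_5$-based two-colouring of $K_5$ extends to a two-colouring of $K_6-e$ with no monochromatic triangle; identifying $T \cup T'$ with $V(K_6-e)$ so that the missing edge corresponds to $au_1$ and extending via the forced red/blue incidences on edges incident to $W$ yields a two-colouring of $E(G)$ with no monochromatic triangle (the independence of $W$ ruling out triangles internal to $W$, and careful choice of the $K_6-e$ colouring handling triangles through each $w \in W$). This contradicts $G \to K_3$ and completes the proof.
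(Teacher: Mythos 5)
Your overall skeleton (a red triangle $T$, all edges outside $T$ forced blue, then a blue triangle disjoint from $T$, then analysis of missing edges in the resulting six-set using the independence of $W$) matches the spirit of the paper, but both of the steps that carry the real weight are left as sketches, and the sketches as stated do not go through. First, the production of a triangle in $G[S]$: you defer exactly the step where the difficulty lies, and your plan (keep all edges inside $S$ blue and only recolour the edges of $T$ and the $T$--$S$ edges) cannot succeed by itself. With $S$ kept blue, your constraint system says that the blue neighbourhood in $S$ of each vertex of $T$ must be independent in $G[S]$, while the red edge of the recoloured $T$ must not be seen in red by any common neighbour in $S$; if, for instance, the vertices of $T$ were joined to a $5$-cycle in $S$, these constraints are unsatisfiable (two independent sets of $C_5$ cannot cover its five vertices while meeting the red-edge condition), so one must bring in the no-monochromatic-$K_3+K_2$ hypothesis on the \emph{original} cross-edge colours to exclude such configurations --- work you have not indicated. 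The paper, for comparison, does not try to get a disjoint blue triangle in one step: it first produces \emph{some} blue triangle by a two-stage recolouring and then disjointifies via a separate neighbourhood-disjointness argument; your stronger one-step claim is correspondingly harder, not easier.

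Second, the final extension step is flawed as described. Any triangle-free colouring of $K_6-e$ must contain a red edge inside $T$ and a blue edge inside $T'$ (else $T$ or $T'$ would be monochromatic), so any $w\in W$ adjacent to both endpoints of that red edge of $T$ produces a red triangle once you impose your ``forced'' red colour on the $W$--$T$ edges; those incidences are forced only in the original colouring, and in the new colouring the edges at each $w$ must be coloured adaptively, per vertex. Moreover, a vertex $w$ outside with five neighbours in $T\cup T'$ cannot be handled by any recolouring at all: in that situation one must instead exhibit a $K_6$ directly (as the paper does), or reduce to the case of two independent missing edges, which needs its own recolouring; the remaining case, where every outside vertex has at most four neighbours in the six-set, requires a carefully chosen colouring of $T\cup T'$ together with a case analysis on the at-most-four neighbours (monochromatic paths, $C_4$'s and stars). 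None of this appears in your proposal, so as written there are genuine gaps at precisely the two places where the content of the theorem lives.
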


In Section~\ref{sec-triangle}, we give an alternative proof of this theorem, using similar techniques to those developed for the proof of Theorem~\ref{maintheorem}.

\medskip
\noindent
{\bf Notation.} All graphs are simple and finite. As a convenient abuse of notation, we write $G$ both for a graph and for its set of vertices. We write $E(G)$ for the set of edges of $G$.

\medskip
\noindent
{\bf Structure of the paper} In Section~\ref{sec-stability} we prove a Ramsey stability lemma, crucial for the proof of Theorem~\ref{maintheorem}, but which is also of independent interest. In Section~\ref{sec-proof} we give the proof of Theorem~\ref{maintheorem}. In Section~\ref{sec-triangle} we give the proof of Theorem~\ref{carly+tibor}. Finally, we conclude by giving further discussion of Ramsey equivalence, including a discussion of some still-open conjectures in this field, and adding some more.

\section{Ramsey stability}\label{sec-stability}
We first prove a lemma which may be of independent interest; we refer to it as a Ramsey stability result, since it states that if a graph $G$ is Ramsey for the clique $K_n$ then we can remove any small number of vertices and the remaining graph will still possess a Ramsey property almost as strong as the original. 

\begin{lemma}\label{lem:big}
Let $n\geq 4$ and $G\to K_n$. Let $V\subset G$ with $2n\leq \abs{V}\leq 3n-3$ and $V_0\subset V$ be any set with $\abs{V_0}\leq 2n-2$. Finally, let $x$ and $y$ be any vertices from $V\sm V_0$. 

Then, in any colouring of the edges of $G$, there exists a monochromatic copy of $K_{n-1}$ in $G\sm V_0$, say with vertex set $W$, such that either $W\cap V=\{x\}$, or $W\cap V=\{y\}$, or $x,y\not\in W\cap V$.
\end{lemma}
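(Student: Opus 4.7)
The plan is to argue by contradiction. Suppose some colouring $c$ of $E(G)$ has no monochromatic $K_{n-1}$ in $G\sm V_0$ satisfying the stated intersection condition; call such a $K_{n-1}$ \emph{good}. We will construct a colouring $c'$ of $E(G)$ containing no monochromatic $K_n$, contradicting $G\to K_n$.

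First, I reformulate the target. Setting
\[
H_x=(G\sm V)\cup\{x\},\qquad H_y=(G\sm V)\cup\{y\},\qquad H_0=G\sm V_0\sm\{x,y\},
\]
an $(n-1)$-vertex set $W\se G\sm V_0$ is easily checked to be good if and only if $W\se H_x$, $W\se H_y$, or $W\se H_0$. So the hypothesis becomes that none of $H_x,H_y,H_0$ contains a $c$-monochromatic $K_{n-1}$.

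Define $c'$ by recolouring to red every edge that is either incident to $V_0$ or has both endpoints in $V$ (in particular the edge $xy$, if present), and leaving all other edges as in $c$. A blue $K_n$ $K$ in $c'$ uses none of the recoloured edges, so it must satisfy $K\cap V_0=\emptyset$ and $|K\cap V|\leq 1$; a short case analysis on whether the possible $V$-vertex of $K$ is $x$, $y$, or an element of $V\sm\{x,y\}$ then shows that removing one vertex of $K$ produces a $c$-blue good $K_{n-1}$, contradicting the hypothesis. So $c'$ has no blue $K_n$, and by $G\to K_n$ it has a red $K_n$ $K$. The symmetric case analysis extracts a $c$-red good $K_{n-1}$ from $K$ unless $|K\cap V_0|\geq 2$ or $|K\cap V|\geq 3$; call such a $K$ \emph{bad}.

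The main obstacle is ruling out bad red $K_n$'s. For this I would consider the mirror colouring $c''$, obtained by recolouring the same edges to blue rather than red; by the symmetric argument $c''$ has no red $K_n$, and so contains a bad blue $K_n$ $K^*$. In the original colouring $c$, every edge of $K$ that is neither incident to $V_0$ nor contained in $V$ is $c$-red, while every such edge of $K^*$ is $c$-blue, and these two conditions are incompatible. Hence no edge between two vertices of $K\cap K^*$ can escape the recoloured set, which imposes rigid structural constraints on the overlap (e.g.\ forcing $K\cap K^*\se V$ whenever $|K\cap K^*|\geq 2$). Combined with $|K\cap V|,|K^*\cap V|\geq 3$ and the bounds $|V|\leq 3n-3$, $|V_0|\leq 2n-2$, and $n\geq 4$, one should then derive a numerical contradiction. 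Carrying out this overlap analysis — particularly the subcase $|K\cap V_0|\geq 2$, where the $V_0$-incident edges are recoloured differently in $c'$ and $c''$ — is where the bulk of the technical work will lie, and is the step I expect to require the most care.
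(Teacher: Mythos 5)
Your opening reduction (good $K_{n-1}$ means $W\se (G\sm V)\cup\{x\}$, $W\se (G\sm V)\cup\{y\}$, or $W\se G\sm(V_0\cup\{x,y\})$) and the extraction of a good $K_{n-1}$ from any monochromatic $K_n$ in $c'$ that is not ``bad'' are fine, but the plan breaks down exactly at the step you flag as technical, and the problem is structural rather than technical: it cannot be completed. Your recolouring $c'$ turns \emph{every} edge inside $V$ red, so any $n$-clique of $G$ whose vertices all lie in $V$ is automatically a red $K_n$ in $c'$, and it is automatically bad (it has $\abs{K\cap V}=n\geq 3$). Such cliques cannot be excluded: $\abs{V}\geq 2n>n$, and in every intended application of the lemma (Lemmas~\ref{lem-nand1} and~\ref{lem-2intersection}) the set $V$ actually contains the vertex set of a monochromatic $K_n$ of the original colouring, so $G[V]\supseteq K_n$ is the generic situation. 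A bad $K_n$ lying inside $V$ uses only recoloured edges and therefore carries no information whatsoever about $c$; the same applies to $c''$. Consequently the final contradiction you hope to extract from a bad red $K$ in $c'$ and a bad blue $K^*$ in $c''$ is not available: $K$ and $K^*$ may be disjoint, or may even be the same clique sitting inside $V$, and your ``incompatibility'' argument only constrains edges between vertices of $K\cap K^*$ that avoid the recoloured set, of which there need be none. No amount of overlap analysis recovers the lemma from the mere existence of these two bad cliques.

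This is precisely the difficulty the paper's proof is engineered to avoid. There, $V_0$ is split into two classes $V_R,V_B$ of size $n-3$ plus four single vertices, and the edges inside and leaving $V$ are recoloured according to a fixed pattern $G_R$ (with complement $G_B$) chosen so that both auxiliary graphs are $K_4$-free, each class has size at most $n-3$, and the vertices joined monochromatically to $G\sm V$ are internally coloured with the opposite colour. These properties guarantee that no monochromatic $K_n$ of the recoloured graph can hide inside $V$, so every monochromatic $K_n$ forced by $G\to K_n$ must use many vertices outside $V$, and the pattern then pins down its trace on $V$ well enough to produce the good $K_{n-1}$. If you want to salvage your two-colouring idea, you would need to replace ``recolour everything inside $V$ red (resp.\ blue)'' by a recolouring of $G[V]$ that itself contains no monochromatic $K_n$ and interacts correctly with $x$, $y$ and $G\sm V$ --- which is essentially what the paper's construction does.
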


\begin{proof}

\begin{figure} [h] 
 \centering
 \phantom{asdfasdfasdfaasdfasfasdfad}
 \begin{subfigure}[h]{0.8\textwidth}
 \label{fig:lem1_red}
  \includegraphics[width=1.0\textwidth]{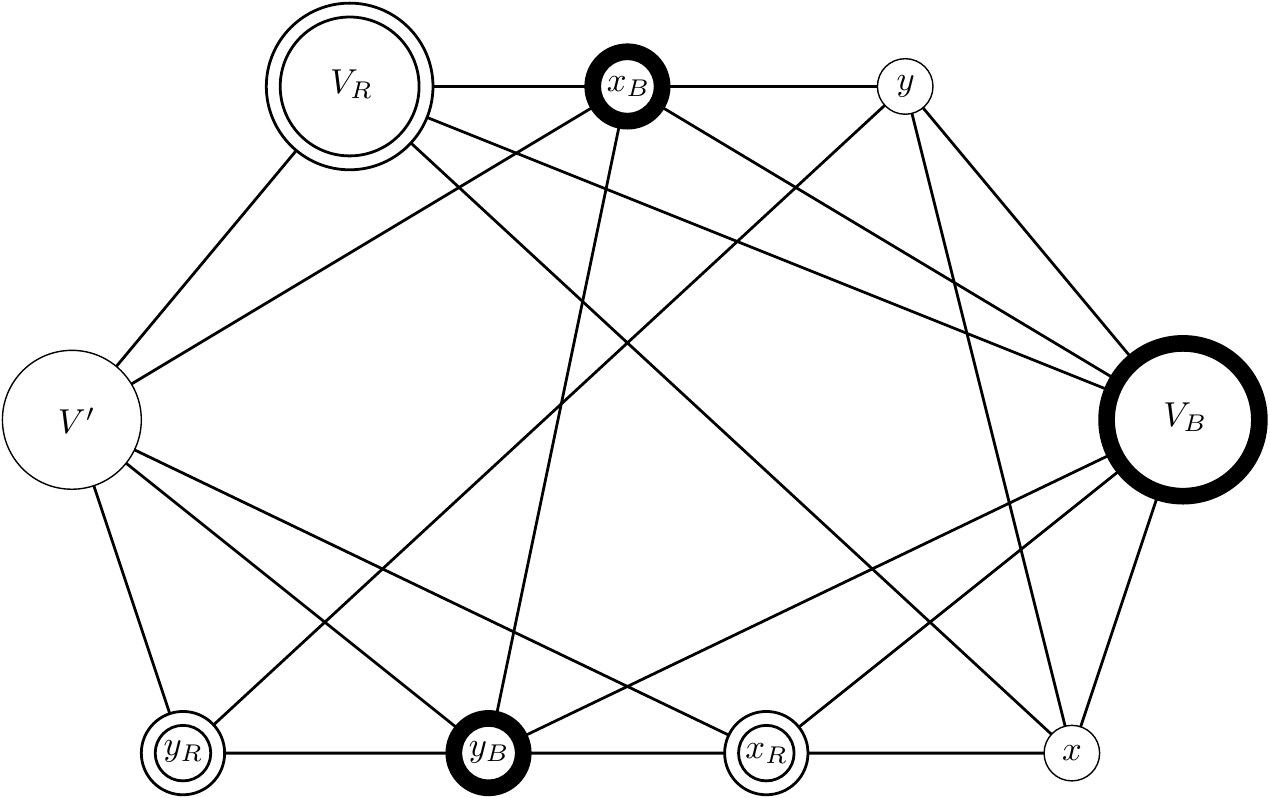}   
 \caption{The (red) edges of $G_R$}

 \end{subfigure} 
  \phantom{asdfasdfasdfaasdfasfasdfad}
 \begin{subfigure}[h]{0.8\textwidth}
  \includegraphics[width=1.0\textwidth]{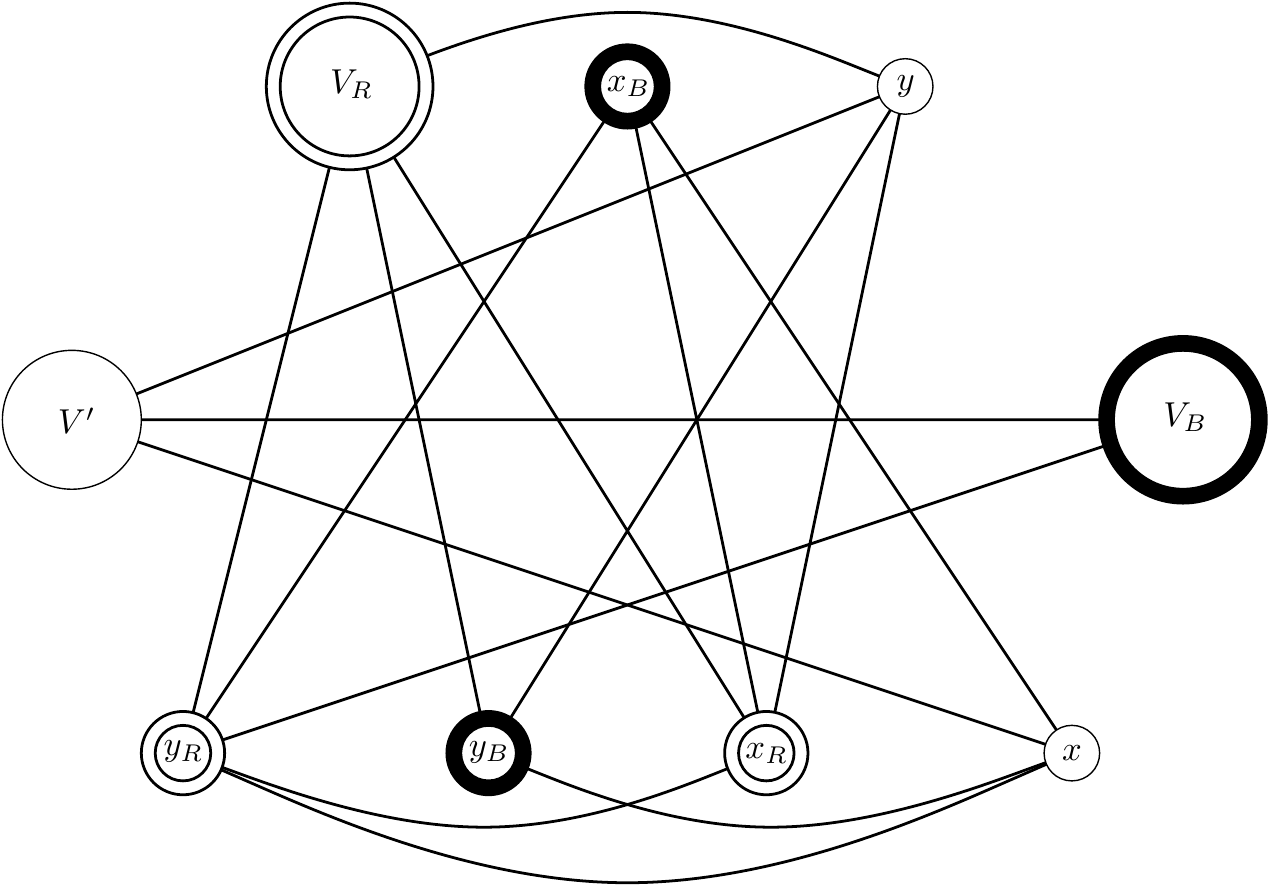}  
 \caption{The (blue) edges of $G_B$}

 \end{subfigure} 
  \caption{The recolouring of $V$ for Lemma~\ref{lem:big}. A black ring around a vertex class indicates that we colour edges between this class and $G\sm V$ blue. A white ring around a vertex class indicates that we colour edges between this class and $G\sm V$ red. No ring indicates that such edges retain their original colour. Edges inside $V_B$ are red, edges inside $V_R$ are blue, and edges inside $V'$ retain their original colour.} 
  \label{fig:lem:big}
\end{figure}
Without loss of generality, we may suppose that $\abs{V}=3n-3$ and $\abs{V_0}=2n-2$. We arbitrarily divide $V_0$ into two sets of $n-3$ vertices each, say $V_R$ and $V_B$, and four single vertices, $x_R,y_R,x_B,y_B$. For brevity, we let $V'=V\sm(V_0\cup\{x,y\})$. 
To define a recolouring of the edges incident to $V$, 
let us define an auxiliary graph $G_R$ with vertex set 
\[V(G_R)=\left\{ V_R, V_B, V',\{x\}, \{y\}, \{x_R\}, \{y_R\}, \{x_B\}, \{y_B\}\right\}.\]
Instead of giving an incomprehensible list of edges, we refer the reader 
to Figure~\ref{fig:lem:big} (A) for the definition of $G_R$. 
Let $G_B$ be the complement of $G_R$, depicted in Figure~\ref{fig:lem:big} (B). 
We now recolour the edges incident to $V$ as follows. 
If $u_1\in U_1\in G_R$ and $u_2\in U_2\in G_R$ such that $U_1\neq U_2$, 
then colour the edge $u_1u_2$ red if $U_1U_2\in E(G_R)$, 
and colour the edge $u_1u_2$ blue otherwise. 
Furthermore, colour all edges in $E(V_B)$ red, and all edges in $E(V_R)$ blue. 
The edges in $E(V')$ retain their original colouring. 
For all $u\in \{x_B,y_B\}\cup V_B$ and all $v\in G\sm V$, colour the edge $uv$ blue. For all $u\in \{x_R,y_R\}\cup V_R$ and all $v\in G\sm V$, colour the edge $uv$ red. 
It will be convenient to call the vertices in $\{x_B,y_B\}\cup V_B$ {\em blue vertices}, and to call the vertices in $\{x_R,y_R\}\cup V_R$ {\em red vertices}. The recolouring is indicated in Figure~\ref{fig:lem:big}. 

The crucial properties of this recolouring are the following, which are easy to verify from examining Figure~\ref{fig:lem:big}: 
\begin{enumerate}
\item Both $G_R$ and $G_B$ are $K_4$-free. 
\item Every triangle in $G_R$ contains at least one of $\{x_B\},\{y_B\},V_B$,  and every triangle in $G_B$ contains at least one of $\{x_R\}, \{y_R\}, V_R$. 
\item The blue vertices $V_B\cup\{x_B,y_B\}$ are connected by only red edges in $G$, and	the red vertices $V_R\cup\{x_R,y_R\}$ are connected by only blue edges in $G$. 
\end{enumerate}

Since $G$ is Ramsey for $K_n$ there must be a monochromatic copy of $K_n$ present in $G$ after this recolouring. We claim that, thanks to the fortuitous properties of our recolouring, this forces a monochromatic $K_{n-1}$ in the original colouring with the required properties. 

Let $U$ be the vertex set of the monochromatic $K_n$ present in $G$ after this recolouring. If $\abs{U\cap (V_0\cup\{x,y\})}\leq 1$ then the lemma follows immediately, since discarding at most one vertex would leave a monochromatic $K_{n-1}$ in the original colouring (as the only edges which are recoloured are incident with $V_0\cup\{x,y\}$), completely disjoint from $V_0\cup\{x,y\}$ as required.

We may suppose, therefore, that $\abs{U\cap (V_0\cup\{x,y\})}\geq 2$. The first case to consider is when $U\subset V$. By Property $(1)$ and since $n\geq 4$, $U$ must contain at least two vertices from one of the classes $V_R$, $V_B$, or $V'$. 

Suppose first that $|U\cap V_B|\geq 2$. Then $U$ must form a red $K_n$, and hence can contain at most one vertex from $V_R$, and no vertex from $V'$, since all edges between $V_B$ and $V'$ are blue. By similar reasoning, if $|U\cap V_R|\geq 2$, then $U$ must form a blue $K_n$, and hence cannot use any vertex from $V'\cup V_B$. Therefore, there exists at most one class $V''\in \{V_R,V_B,V'\}$ such that $|U\cap V''|\geq 2$. Since each such class contains at most $n-3$ vertices, discarding all but one vertex of $V''$ would force a monochromatic copy of $K_4$ within $V$, using at most one vertex from each of $V_R$, $V_B$, and $V'$. This would force a copy of $K_4$ in either $G_R$ or $G_B$, which contradicts Property $(1)$. 

Assume now that $U \not\subset V$, and suppose that $U$ hosts a red copy of $K_n$. Since all blue vertices are connected to $G\sm V$ by blue edges, $U$ cannot contain any blue vertices. Therefore, by Property $(2)$, $U$ uses vertices of at most two nodes in $G_R$. Furthermore, since the copy is red,  $|U\cap V_R|\leq1$. 

If $V'\cap U \neq \emptyset$ then $U$ can use at most one vertex from $V\sm V'$, and discarding this vertex leaves a monochromatic $K_{n-1}$ in the original colouring, completely disjoint from $V_0\cup\{x,y\}$, as required. 

If $V'\cap U = \emptyset$ then, by Property $(2)$ again, it must use exactly two vertices from $V_R\cup \{x_R,y_R,x,y\}$. Since there are only blue edges between vertices in $V_R\cup \{x_R,y_R\}$, by Property $(3)$, at least one of these two vertices in $U\cap V$ must be $x$ or $y$. Discarding the other vertex in $U\cap V$ leaves a 
monochromatic copy of $K_{n-1}$ in the original colouring which intersects $V$ in either $x$ or $y$, but no other vertices, as required. 

The case when $U$ hosts a blue copy of $K_n$ is handled similarly, and the proof is complete.
\end{proof}

The following corollary is immediate when $n\geq 4$; in this weakened form it also holds for $n=3$. We will not need Corollary~\ref{cor:small} in the rest of this paper, but it is a pleasingly simple way to demonstrate the Ramsey stability ethos of Lemma~\ref{lem:big}. 

\begin{corollary}\label{cor:small}
Let $n\geq 3$ and $G\to K_n$. If $V\subset G$ has $\abs{V}\leq 2n-2$ then $G\sm V\to K_{n-1}$. 
\end{corollary}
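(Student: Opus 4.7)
For $n\geq 4$, the plan is to invoke Lemma~\ref{lem:big} almost verbatim. Given any 2-colouring of $E(G\sm V)$, I would extend it arbitrarily to a 2-colouring of $E(G)$. Since $G\to K_n$, the graph $G$ has at least $R(n,n)\geq 3n-3$ vertices, so I may enlarge $V$ to a set $V'\supseteq V$ with $|V'|=3n-3$. The set $V'\sm V$ has size at least $(3n-3)-(2n-2)=n-1\geq 3$, so I may pick two distinct vertices $x,y\in V'\sm V$. Applying Lemma~\ref{lem:big} to $V'$ with $V_0:=V$ produces a monochromatic $K_{n-1}$ sitting inside $G\sm V_0=G\sm V$; since all of its edges already lay in $G\sm V$, their colours agree with those of the original colouring, supplying the required monochromatic $K_{n-1}$.

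For $n=3$, Lemma~\ref{lem:big} does not apply, so a short direct argument is needed. The statement reduces to showing that $G\sm V$ contains an edge whenever $|V|\leq 4$ and $G\to K_3$. Suppose for contradiction that $G\sm V$ is independent; then $V$ is a vertex cover. Since $|G|\geq R(3,3)=6$, I may assume (after enlarging $V$ if necessary) that $|V|=4$, and write $V=\{v_1,v_2,v_3,v_4\}$. I would then colour $E(G)$ as follows: inside $V$, colour $v_1v_2,v_1v_3,v_2v_4$ blue and $v_3v_4,v_1v_4,v_2v_3$ red; and for every $u\in G\sm V$, colour $uv_1,uv_2$ red and $uv_3,uv_4$ blue. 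A direct check on each of the four triangles inside $V$ and each of the six triangles of the form $\{u,v_i,v_j\}$ with $u\in G\sm V$ shows that none is monochromatic, and there are no other triangles because $G\sm V$ is independent. This contradicts $G\to K_3$.

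I expect no serious obstacle. The $n\geq 4$ case is just bookkeeping to fit Lemma~\ref{lem:big}'s hypotheses, and the $n=3$ case requires only the hand-built colouring above, whose verification is a case analysis on ten triangles. The only moment of genuine creativity is balancing the colouring on $V$ so that it interacts compatibly with the uniform colouring assigned at every outside vertex $u$; this is achieved by arranging that the two edges from $u$ to vertices on the same side of the partition $\{v_1,v_2\}\mid\{v_3,v_4\}$ receive the opposite colour to the edge between those two vertices inside $V$.
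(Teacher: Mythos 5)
Your proposal is correct and takes essentially the same route as the paper: for $n\geq 4$ you enlarge $V$ and invoke Lemma~\ref{lem:big} with $V_0=V$, exactly as the paper does (it pads to size $2n$ rather than $3n-3$, which is immaterial), and for $n=3$ your explicit colouring around a $4$-set (edges to $v_1,v_2$ red, to $v_3,v_4$ blue, with complementary paths inside $V$) is precisely the kind of recolouring the paper indicates in its figure, forcing any monochromatic triangle to use two vertices of $G\sm V$ and hence an edge there. No gaps.
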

\begin{proof}
For $n\geq 4$ this follows immediately from Lemma~\ref{lem:big}, after expanding $V$ by two arbitrary vertices from $G\sm V$. For $n=3$, it suffices to give an explicit colouring of $K_4$ in a similar fashion, as we do in Figure~\ref{}. 

\begin{figure} [h]
 \centering
\phantom{asdfasdfasdfaasdfasfasdfad}
 \includegraphics[width=0.3\textwidth]{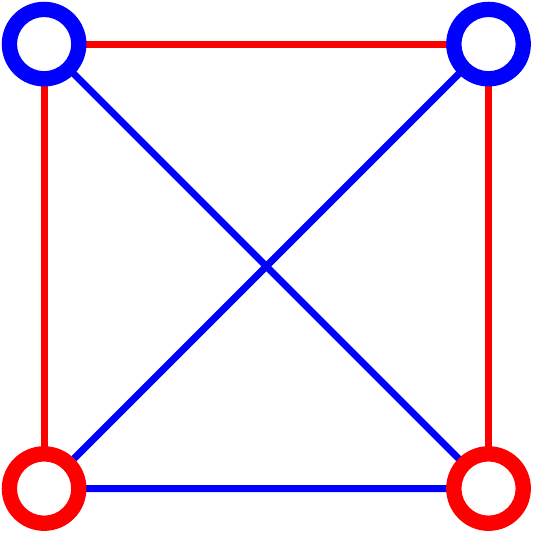}
 \caption{}
 \label{}
 \end{figure} 
 
Thus, if we recolour the edges adjacent to $V$ as indicated in Figure~\ref{}, then any monochromatic $K_3$ in $G$ must have at least two vertices from $G\sm V$, and hence $G\sm V\to K_2$ as required.
\end{proof}
\FloatBarrier

\section{Proof of the main result}\label{sec-proof}

We recall our goal: to show that $K_n$ is Ramsey equivalent to $K_n+K_{n-1}$ for $n\geq 4$. It is, of course, trivial that if $G\to K_n+K_{n-1}$ then $G\to K_n$. It remains to show that if $G\to K_n$ then $G\to K_n+K_{n-1}$. Our strategy will be to accumulate more and more information about the monochromatic structures present in a colouring of a graph, Ramsey for $K_n$, without a monochromatic $K_n+K_{n-1}$, until we are eventually able to obtain a contradiction.

\begin{lemma}\label{lem-redandblue}
Let $n\geq4$. If $G\to K_n$ then, in every colouring of $G$, there is either a monochromatic $K_n+K_{n-1}$, or a red $K_n$ and a blue $K_n$.
\end{lemma}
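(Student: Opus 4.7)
I prove the contrapositive: assume the given 2-colouring of $G$ contains no monochromatic $K_n+K_{n-1}$. Since $G\to K_n$ guarantees a monochromatic $K_n$, by symmetry of the two colours I may assume there is a red $K_n$ on some vertex set $U$, and the task reduces to producing a blue $K_n$. I proceed by contradiction: suppose no blue $K_n$ is present in the colouring.

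From the no-$K_n+K_{n-1}$ hypothesis, no red $K_{n-1}$ can be disjoint from the red $K_n$ on $U$; in particular, $G\sm U$ contains no red $K_{n-1}$. Corollary~\ref{cor:small} applied with $V=U$ (valid since $|U|=n\leq 2n-2$) gives $G\sm U\to K_{n-1}$, so the induced colouring of $G\sm U$ must contain a monochromatic $K_{n-1}$, forced to be blue; denote its vertex set by $W$. Furthermore, the absence of a blue $K_n$ forces every vertex $v\in G\sm W$ to have at least one red edge into $W$, since otherwise $W\cup\{v\}$ would be a blue $K_n$. In particular, each $u\in U$ has a red neighbour in $W$.

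To generate a contradiction I intend to apply Lemma~\ref{lem:big} with carefully chosen parameters to harvest further monochromatic $K_{n-1}$'s in $G\sm V_0$ for various $V_0\supseteq U$ with $|V_0|\leq 2n-2$; each such $K_{n-1}$ is again forced to be blue by the argument above. A natural instance is $V_0=U\cup(W\sm\{w_1,w_2\})$ (of size $2n-3$) together with $V=V_0\cup\{w_1,w_2,z\}$ for an auxiliary vertex $z\in G\sm(U\cup W)$, and $x=w_1$, $y=w_2$: Lemma~\ref{lem:big} then produces a blue $K_{n-1}$ $W'\se G\sm V_0$ whose intersection with $V$ is either $\{w_1\}$, $\{w_2\}$, or contained in $\{z\}$, so $W'$ overlaps $W$ in at most one vertex. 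Ranging over choices of $\{w_1,w_2\}$ and $z$ yields a rich family of blue $K_{n-1}$'s meeting $W$ in controlled ways.

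The main obstacle---and the heart of the argument---is converting this abundance of blue $K_{n-1}$'s into the sought contradiction. I expect to do so via a targeted recolouring of $G$ in the spirit of Lemma~\ref{lem:big}: using the collection of blue $K_{n-1}$'s and the red-edge constraint from the previous paragraph, I will recolour a small number of edges incident to $U\cup W$ so that the new colouring of $G$ has neither a red nor a blue $K_n$, directly contradicting $G\to K_n$. The delicate task is engineering the recolouring so that every candidate monochromatic $K_n$ in the new colouring is ruled out: red $K_n$'s by exploiting that $U$ is essentially the only location with enough red density, and blue $K_n$'s by leveraging that no vertex outside $W$ is blue-adjacent to all of $W$ (and, by repetition, to any of the $W'$ produced above).
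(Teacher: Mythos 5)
Your opening moves are fine as far as they go: assuming no monochromatic $K_n+K_{n-1}$ and no blue $K_n$, a red $K_n$ on $U$ forces $G\sm U$ to contain no red $K_{n-1}$, and Corollary~\ref{cor:small} (or Lemma~\ref{lem:big} directly) then yields a blue $K_{n-1}$ on some $W\se G\sm U$, and further applications of Lemma~\ref{lem:big} produce more blue $K_{n-1}$'s meeting $W$ in at most one vertex. (One small slip: since $G$ need not be complete, the absence of a blue $K_n$ does not give every $v\in G\sm W$ a \emph{red} edge into $W$, only that no $v$ is joined to all of $W$ by blue edges; in particular ``each $u\in U$ has a red neighbour in $W$'' is unjustified.) The genuine gap is that the proof stops exactly where the difficulty begins: the ``targeted recolouring'' that is supposed to destroy every monochromatic $K_n$ and contradict $G\to K_n$ is never constructed, only announced. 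That construction is the entire content of the lemma. In particular, your sketch never confronts the troublesome scenario in which red copies of $K_n$ abound with small and varied intersections with $U$: any recolouring touching only edges incident to $U\cup W$ must be checked against red $K_n$'s that meet $U$ in just two vertices and otherwise live far from $U\cup W$, and it is not clear that your collection of blue $K_{n-1}$'s helps with this at all.

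For comparison, the paper's proof does not route through blue $K_{n-1}$'s or Lemma~\ref{lem:big} at all; it works directly with red cliques and splits into two cases. If some edge $ab$ of the red $K_n$ on $V_R$ has the property that every red $K_n$ meets $V_R$ in a vertex outside $\{a,b\}$, a simple recolouring of $V_R$ (all other interior edges blue, edges from $V_R\sm\{a,b\}$ to the outside red) forces either a red $K_{n-1}$ disjoint from $V_R$, a red $K_n$ meeting $V_R$ exactly in $\{a,b\}$ (contradiction), or an originally blue $K_n$. Otherwise, for every pair $\{a,b\}\se V_R$ there is a red $K_n$ meeting $V_R$ exactly in $\{a,b\}$; the paper then fixes such a second red clique $W_R$ with $\abs{V_R\cap W_R}=2$ and performs a more intricate recolouring of $V_R\cup W_R$ whose analysis again yields either a red $K_n+K_{n-1}$ or an untouched blue $K_n$. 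This case distinction, and the explicit recolourings tailored to it, are precisely the missing heart of your argument, so as it stands the proposal is an outline rather than a proof.
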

\begin{proof}
Suppose, without loss of generality, that the edges of $G$ are coloured so that there is a red copy of $K_n$. Let $V_R$ be the vertex set of this red $K_n$. As in the proof of Lemma~\ref{lem:big}, we will recolour some edges of $G$ and use the assumption that $G\to K_n$ to prove the claim. 

Suppose first that there is an edge $ab$ of $V_R$ which has the property that every red $K_n$ 
intersects $V_R$ in at least one vertex besides $a$ and $b$. 
In this case, we recolour every other edge of $V_R$ blue, and colour the edges between 
$V_R\sm\{a,b\}$ and $G\sm V_R$ red. 

Since $G\to K_n$ there must be a monochromatic $K_n$ in this recoloured $G$. 
Suppose first that there is a red $K_n$. If it uses at least $n-1$ vertices from $G\sm V_R$ 
then there is a red $K_{n-1}$ present in $G\sm V_R$ in the original colouring, 
and hence a red $K_n+K_{n-1}$. 
Otherwise, it must use a red edge from $V_R$. 
But the only red edge remaining in $V_R$ is $ab$, and the edges from $\{a,b\}$ to $G\sm V_R$ 
retained their original colouring. Therefore, we must have a red $K_n$ in the original colouring 
that intersects $V_R$ in exactly $\{a,b\}$, which contradicts our choice of $ab$. 
Secondly, suppose that there is a blue $K_n$ in the recoloured $G$. 
If it uses any of the new blue edges inside $V_R$, then it must be contained entirely inside $V_R$, 
since the edges from $V_R\sm\{a,b\}$ to $G\sm V_R$ are all red. 
However, this is impossible, since $V_R$ has $ab$ still coloured red. 
Therefore we must have a blue $K_n$ that uses only edges which were originally blue, and so we have a red $K_n$ and a blue $K_n$, as required. 

\begin{figure} [htbp]
 \centering
\phantom{asdfasdfasdfaasdfasfasdfad}
  \includegraphics[width=0.9\textwidth]{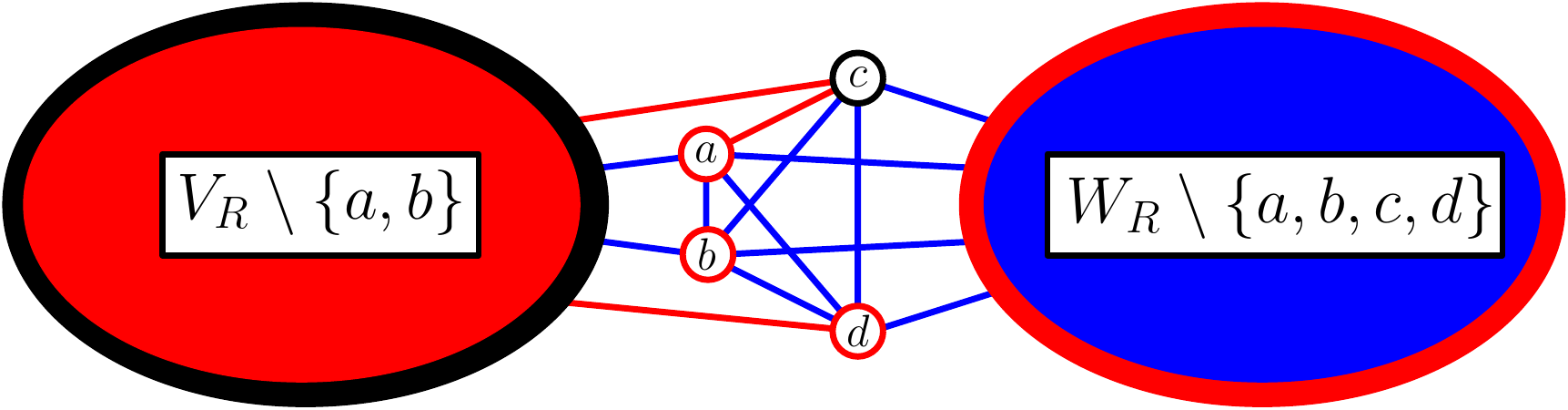}
 \caption{The colouring for Lemma~\ref{lem-redandblue}.}
 \label{fig:lemma3}
 \end{figure}

We may now assume that, for every pair $\{a,b\}\se V_R$, there is another red $K_n$ intersecting $V_R$ in only the edge $ab$. Let $W_R$ be the vertex set of another red $K_n$ such that $\abs{V_R\cap W_R}=2$, say $V_R\cap W_R=\{a,b\}$, and let $c,d$ be any two vertices in $W_R\sm V_R$. We recolour (some of) the edges incident to $W_R$ in the following way. An illustration of this colouring can be found in Figure \ref{fig:lemma3}. 
\begin{itemize}
\item 
	For all $w\in W_R\sm\{a,b,c,d\}$, all $w'\in W_R$ ($w'\neq w$), and all $v\in G\sm W_R$, 
	we colour the edge $ww'$ blue and the edge $wv$ red (if present in $G$). 
\item For all $v\in V_R\sm \{a,b\}$, we recolour the edges $av$ and $bv$ blue, 
	and the edges $cv$ and $dv$ red (if present in $G$). 
\item For all $x\in G\sm(V_R\cup W_R)$, we colour the edges $ax$, $bx$ and $dx$ in red 
	(the edge $cx$ retains its original colour). 
\item Every edge in $\{a,b,c,d\}$ is recoloured blue, except for $ac$ which remains red. 
\end{itemize}
Again, since $G\to K_n$ there must be a monochromatic $K_n$ in this recoloured $G$. 

Suppose first that there is a red $K_n$, say on vertex set $W$. 
If it uses at least $n-1$ vertices from $G\sm W_R$ 
then there is a red $K_{n-1}$ present in $G\sm W_R$ in the original colouring, 
and hence a red $K_n+K_{n-1}$. 
Otherwise, it must use a red edge from $W_R$. 
But the only red edge remaining in $W_R$ is $ac$. 
Then $W$ must be disjoint from $V_R\sm\{a\}$, since each such $ax$ is blue. 
Hence, $W\cap (V_R\cup W_R) = \{a,c\}$. But none of the edges inside $W\sm\{a\}$ were recoloured, and hence $W\sm\{a\}$ hosts a red $K_{n-1}$ in the original colouring that is vertex disjoint from $V_R$. 

Secondly, suppose that there is a blue $K_n$ in the recoloured $G$, say on vertex set $W$. 
If it uses any of the new blue edges inside $V_R\cup W_R$, then it must be contained entirely inside $V_R\cup W_R$, since the edges from $W_R\sm\{c\}$ to $G\sm (V_R\cup W_R)$ are all red. However, $V_R\cup W_R$ does not host a blue $K_n$ in this recolouring. Therefore we must have a blue $K_n$ that uses only edges which were originally blue, and so we have a red $K_n$ and a blue $K_n$, as required. 
\end{proof}

\begin{lemma}\label{lem-nand1}
Let $n\geq 4$. If $G\to K_n$ then, in any colouring of $G$, if there is a monochromatic $K_{n+1}$ then there is a monochromatic $K_n+K_{n-1}$.
\end{lemma}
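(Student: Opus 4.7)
The plan is to combine Lemma~\ref{lem-redandblue} with Lemma~\ref{lem:big}. Given a colouring of $G$ containing a monochromatic $K_{n+1}$---WLOG red, say on vertex set $V_R$ of size $n+1$---one may assume there is no monochromatic $K_n+K_{n-1}$ (else the conclusion is immediate) and apply Lemma~\ref{lem-redandblue} to extract a blue $K_n$, say on $V_B$. Since $V_R$ spans only red edges and $V_B$ only blue, $\abs{V_R\cap V_B}\le 1$. The goal is then to find a monochromatic $K_{n-1}$ $W$ meeting $V_R$ in at most one vertex and disjoint from $V_B$: if $W$ is red, any $n$-subset of $V_R\sm W$ gives a disjoint red $K_n$; if $W$ is blue, $V_B$ itself is a disjoint blue $K_n$. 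Either case yields the required monochromatic $K_n+K_{n-1}$.

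To produce such a $W$, I would apply Lemma~\ref{lem:big} with the parameters
\[V_0=V_B\cup(V_R\sm\{v_1,v_2,v_3\}),\qquad V=V_R\cup V_B,\]
where $v_1,v_2,v_3\in V_R$ are any three vertices with at least two lying outside $V_B$ (possible since $\abs{V_R\cap V_B}\le 1$), and the distinguished pair $x,y$ is chosen in $\{v_1,v_2,v_3\}\sm V_B$. A routine calculation gives $\abs{V_0}\le 2n-2$ and $2n\le\abs{V}\le 2n+1\le 3n-3$ (the last inequality needing $n\ge 4$), so the hypotheses of Lemma~\ref{lem:big} are met. The structural point behind this choice is that $V\sm V_0=\{v_1,v_2,v_3\}\sm V_B\se V_R$ and $V_B\se V_0$.

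Lemma~\ref{lem:big} then delivers a monochromatic $K_{n-1}$ $W\se G\sm V_0$ with $W\cap V\in\{\{x\},\{y\}\}$ or $W\cap\{x,y\}=\emptyset$. Since $V_B\se V_0$, automatically $W\cap V_B=\emptyset$. Since $W\cap V\se V\sm V_0\se\{v_1,v_2,v_3\}$, in each of the three cases the intersection $W\cap V_R=W\cap V$ contains at most one vertex of $\{v_1,v_2,v_3\}$. Hence $\abs{V_R\sm W}\ge n$, and the dichotomy in the first paragraph produces the desired monochromatic $K_n+K_{n-1}$.

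I expect the main subtlety to lie in balancing the parameters for Lemma~\ref{lem:big} so that the inclusions $V\sm V_0\se V_R$ and $V_B\se V_0$ coexist with the size constraints: $V_0$ must swallow $V_B$ together with all but three vertices of $V_R$, leaving precisely enough room for $x,y$ in $V\sm V_0$. The boundary case $\abs{V_R\cap V_B}=1$ needs a touch of care---the shared vertex lies in $V_0$ and so cannot be picked as $x$ or $y$---but insisting that $v_2,v_3\notin V_B$ at the outset resolves this cleanly.
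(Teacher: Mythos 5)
Your proposal is correct and follows essentially the same route as the paper: invoke Lemma~\ref{lem-redandblue} to obtain a blue $K_n$, then apply Lemma~\ref{lem:big} to $V=V_R\cup V_B$ with a $V_0$ of size at most $2n-2$ containing $V_B$, so that the resulting monochromatic $K_{n-1}$ misses $V_B$ and meets $V_R$ in at most one vertex. Your version merely makes the parameter bookkeeping (and the case $\abs{V_R\cap V_B}=1$) explicit, which the paper leaves implicit.
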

\begin{proof}
Suppose that $G$ has, say, a red $K_{n+1}$, on vertex set $V_R$. By Lemma~\ref{lem-redandblue}, we may assume that there exists a blue $K_n$, say on vertex set $V_B$. Let $V=V_R\cup V_B$, so that $\abs{V}\leq 2n+1$. We now apply Lemma~\ref{lem:big}, with $V_0\subset V$ being any set of $2n-2$ vertices containing $V_B$. This yields a monochromatic $K_{n-1}$ which intersects the red $K_{n+1}$ in at most one vertex, and the blue $K_n$ not at all, and hence we must have a monochromatic $K_n+K_{n-1}$.
\end{proof}

\begin{lemma}\label{lem-2intersection}
Let $n\geq 4$, and let $G$ be a graph such that $G\to K_n$. 
Assume that there is a colouring of the edges of $G$ 
with no monochromatic copy of $K_n+K_{n-1}$. Then, in this colouring, no two  
monochromatic copies of $K_n$ intersect in exactly two vertices. 
\end{lemma}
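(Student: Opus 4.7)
Two monochromatic copies of $K_n$ that share exactly two vertices $\{a,b\}$ must have the same colour, since the edge $ab$ takes one colour that both copies must inherit. We may therefore assume for contradiction that there are two red copies $V_1, V_2$ with $V_1 \cap V_2 = \{a,b\}$, and write $V_R^{(i)} = V_i \sm \{a,b\}$ for $i=1,2$. Lemma~\ref{lem-redandblue} supplies a blue $K_n$ on some set $V_B$, and Lemma~\ref{lem-nand1} lets us also assume no monochromatic $K_{n+1}$ exists in the colouring.

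I would first harvest the structural information forced by the absence of a monochromatic $K_n+K_{n-1}$. If any vertex $u \notin V_1 \cup V_2$ were red-complete to $V_R^{(2)}$, then $V_R^{(2)} \cup \{u\}$ would be a red $K_{n-1}$ disjoint from $V_1$, giving the forbidden red $K_n+K_{n-1}$; hence every vertex outside $V_1\cup V_2$ has a blue edge to $V_R^{(2)}$, and symmetrically to $V_R^{(1)}$. Similarly, were some $v\in V_R^{(2)}$ red-complete to $V_R^{(1)}$, the set $V_1 \cup \{v\}$ would be a red $K_{n+1}$, which by Lemma~\ref{lem-nand1} would force a monochromatic $K_n+K_{n-1}$; so every vertex of $V_R^{(2)}$ has a blue edge to $V_R^{(1)}$, and vice versa.

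Next I would define a recolouring of the edges incident to $V_1 \cup V_2$ (and, if needed, of those between $V_B$ and $V_1 \cup V_2$) in the spirit of Lemma~\ref{lem:big} and the second case of Lemma~\ref{lem-redandblue}: the blocks $V_R^{(1)}, V_R^{(2)}, \{a\}, \{b\}$ are assigned ``red-vertex'' or ``blue-vertex'' roles and their cross-edges are set accordingly, so that $V_1$ and $V_2$ are no longer red $K_n$'s. Using the structural observations above, one aims to show that any monochromatic $K_n$ surviving in the recoloured graph corresponds either to a red $K_{n-1}$ lying entirely inside $G \sm (V_1 \cup V_2)$ in the original colouring --- pairing with $V_1$ to give a red $K_n+K_{n-1}$ --- or to a blue $K_{n-1}$ disjoint from $V_B$ in the original, pairing with $V_B$ to give a blue $K_n+K_{n-1}$. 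Since $G \to K_n$ guarantees a monochromatic $K_n$ in the recoloured graph, both possibilities contradict the assumption.

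The main obstacle I foresee lies in the blue case: the natural recolouring that destroys $V_1$ and $V_2$ tends to create blue $K_n$'s in the recoloured graph which share vertices with $V_B$, and ensuring each such $K_n$ yields a blue $K_{n-1}$ genuinely disjoint from $V_B$ will likely require splitting into subcases according to $|V_B \cap (V_1 \cup V_2)| \in \{0,1,2\}$, or further adjusting the colours on the $V_B$-incident edges. The structural observations of the second paragraph --- forbidding any outside vertex or vertex of $V_R^{(i)}$ from being red-complete to $V_R^{(3-i)}$ --- should supply enough rigidity to close each such subcase.
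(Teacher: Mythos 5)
Your opening reductions are fine (two copies of $K_n$ sharing exactly two vertices share the edge between them, so they have the same colour; Lemma~\ref{lem-redandblue} gives a blue $K_n$; Lemma~\ref{lem-nand1} excludes a monochromatic $K_{n+1}$), and your two structural observations are correct, except that ``has a blue edge'' should read ``has a blue or missing edge'' in both places. But from that point on the proposal is a plan rather than a proof: the recolouring that is supposed to destroy $V_1$ and $V_2$ is never specified, the verification that every surviving monochromatic $K_n$ yields a forbidden $K_n+K_{n-1}$ is only ``aimed at'', and you yourself flag that the blue case --- controlling the interaction with $V_B$ --- is unresolved. That unresolved case is precisely the heart of the lemma, so there is a genuine gap.

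It is worth noting how the paper closes exactly this gap: it does not build a new ad hoc recolouring at all, but applies the stability Lemma~\ref{lem:big} with $V_0$ chosen to contain $V_B$ together with most of $V_R$, so that the monochromatic $K_{n-1}$ it produces is automatically disjoint from $V_B$ (hence, if blue, pairs with $V_B$) and nearly disjoint from $V_R$ (hence, if red, pairs with $V_R$ or $V_R'$). The obstruction you would also run into is quantitative: since a red and a blue $K_n$ share at most one vertex, $\abs{V_R\cup V_B}\geq 2n-1$, while Lemma~\ref{lem:big} only allows $\abs{V_0}\leq 2n-2$, so one or two vertices of $V_R$ must be left exposed. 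The paper handles the exposed vertices $x,y$ by showing that the resulting exceptional red $K_{n-1}$'s force $z\in V_R'\sm V_R$ to be red-adjacent to all but at most one vertex of $V_R$; this either creates a red $K_{n+1}$ (finished by Lemma~\ref{lem-nand1}) or isolates a single bad vertex $x$, which a third application of Lemma~\ref{lem:big} eliminates. Your two structural facts point in a similar direction but do not substitute for this analysis, and without an explicit recolouring (or an explicit invocation of Lemma~\ref{lem:big} with admissible $V$, $V_0$) the contradiction is not established.
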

\begin{proof}
Suppose otherwise; without loss of generality, we have two red copies of $K_n$, 
say on vertex sets $V_R$ and $V_R'$, such that $\abs{V_R\cap V_R'}=2$. 
By Lemma~\ref{lem-redandblue} we may further assume that there is a blue $K_n$, 
say on vertex set $V_B$. 

Assume first that $V_B\cap V_R \neq \emptyset$. 
Let $x\in V_R\sm (V_B\cup V_R')$ and $y\in  V_R'\sm (V_B\cup V_R)$ (which exist since $n\geq 4$ and since $V_B$ intersects with $V_R$ and $V_R'$ with at most one vertex each). 
Further, set $V:= V_R\cup V_R' \cup V_B$ and $V_0:= (V_R\cup V_B)\sm\{x\} \se V$. 
By assumption, $\abs{V}\leq 3n-3$ and $\abs{V_0}\leq 2n-2$. 
Therefore, by Lemma \ref{lem:big}, there is a monochromatic copy of $K_{n-1}$, 
say on set $W$, such that either $W\cap V = \{x\}$, 
or $W\cap V \se V\sm(V_0\cup\{x\})$. 
In the first case, when $W\cap V = \{x\}$, then $W$ is disjoint from both $V_B$ and $V_R'$, 
and hence there is a monochromatic copy of $K_n+K_{n-1}$, a contradiction. 
Otherwise, 
$W$ is disjoint from  both $V_B$ and $V_R$, 
and again, we find a monochromatic copy of $K_n+ K_{n-1}$, a contradiction. 

We argue similarly if $V_B\cap V_R' \neq \emptyset$, and therefore assume from now on that $V_B\cap (V_R\cup V_R') = \emptyset$. Let $x,y\in V_R\sm V_R'$ and $z\in V_R'\sm V_R$ be some arbitrarily chosen vertices. We again apply Lemma~\ref{lem:big}, with $V:=V_B\cup V_R\cup W$, where $W=V_R'\sm(V_R\cup\{z\})$, and $V_0:= (V_R\cup V_B)\sm \{x,y\}$. 
It is clear that $\abs{V}\leq 3n-3$ and $\abs{V_0}=2n-2$, as required. 

Suppose that there is a monochromatic copy of $K_{n-1}$ which intersects $V$ 
in only vertices of $W$. In particular, it is vertex-disjoint from $V_B\cup V_R$, 
and hence it creates a monochromatic $K_n+K_{n-1}$, which is a contradiction. 

It follows that there exists a monochromatic copy of $K_{n-1}$ which intersects $V$ 
in either $x$ or $y$, but no other vertices. 
Since it is disjoint from $V_B$, we may assume that it is red. 
If this red $K_{n-1}$ does not use $z$, however, 
then together with $V_R'$ we have a red $K_n+ K_{n-1}$, which is a contradiction. 
Therefore, either $xz$ or $yz$ is red. Since $x$ and $y$ were an arbitrary choice of 
two vertices from $V_R\sm V_R'$, it follows that all but at most one vertex of $V_R$ is connected 
to $z$ by a red edge. 

That is, $V_R\cup\{z\}$ hosts two red copies of $K_n$ that intersect in $n-1$ vertices. 
Note that if $V_R\cup\{z\}$ forms in fact a red copy of $K_{n+1}$, then we are done by 
Lemma \ref{lem-nand1}. 
Therefore, to finish the argument, let $x\in V_R \sm V_R'$ such that the edge $xz$ is blue or not present in $G$. 
As noted, there is at most one such $x$. 
We apply Lemma~\ref{lem:big} yet again to reach a contradiction. 
Let $y\in V_R\sm(V_R'\cup\{x\})$, set $V_0:= (V_R\cup V_B)\sm \{x,y\}$ and 
$V:= (V_R\cup V_R'\cup V_B)\sm \{x\}$. 
Then $\abs{V}=3n-3$ and $\abs{V_0}= 2n-2$. 
By Lemma~\ref{lem:big}, there exists a monochromatic copy of $K_{n-1}$, say on vertex set $W$, 
such that either $W\cap V = \{y\}$, $W\cap V = \{z\}$, or $W\cap V \se V\sm(V_0\cup\{y,z\})$. 
If $W\cap V = \{y\}$, then $W$ is disjoint from $V_R'\cup V_B$ and hence forms a monochromatic 
copy of $K_n+K_{n-1}$ in the original colouring, a contradiction. 
If $W\cap V = \{z\}$, then $W$ is disjoint from $V_B$, and hence we may assume that it is red. 
But then, $W$ is either disjoint from $V_R$ and forms a red copy of $K_n+K_{n-1}$, 
or $x\in W$, and hence the edge $zx$ is red, a contradiction. 
Finally, if $W\cap V \se V\sm(V_0\cup\{y,z\})$, then $W$ together with $V_0\cup \{y,z\}$ forms 
a monochromatic copy of $K_n+K_{n-1}$. 
\end{proof}
We will now conclude the proof of the main result.

\begin{proof}[Proof of Theorem \ref{maintheorem}]
Let $n\geq 4$, and let $G$ be a graph such that $G\to K_n$. 
Assume that there exists a colouring of the edges of $G$ without a monochromatic copy of $K_n+K_{n-1}$. By Lemma \ref{lem-redandblue}, we can assume that there are two (not necessarily disjoint) sets $V_R$ and $V_B$ of vertices such that $G[V_R]$ and $G[V_B]$ form a red and a blue copy of $K_n$, respectively.

By assumption, any other red (blue) copy of $K_n$ intersects $V_R$ ($V_B$) in at least two vertices; in fact, by Lemma \ref{lem-2intersection}, any other red (blue) copy of $K_n$ intersects $V_R$ ($V_B$) in at least three vertices. 
That is, every set $W_R\subset V_R$ of size $\abs{W_R}=n-2$ meets every red copy of $K_n$ in at least one vertex, and every set $W_B\subset V_B$ of size $\abs{W_B}=n-2$ meets every blue copy of $K_n$ in at least one vertex. 

If $V_R\cap V_B = \emptyset$, fix two arbitrary subsets $W_R\subset V_R$ and $W_B\subset V_B$, both of size $\abs{W_R}=\abs{W_B}=n-2$. If $V_R\cap V_B \neq \emptyset$, let $W_B\se V_B$ be a set of size $n-2$ such that $V_R\cap V_B\se W_B$, 
and let $W_R\se V_R$ be a subset of size $n-3$ such that $W_R\cap V_B = \emptyset$ 
(note that $\abs{V_R\cap V_B}= 1$). 
In both cases, the sets $W_R\subset V_R$ and $W_B\subset V_B$ are disjoint and, by the above discussion, any monochromatic copy of $K_n$ meets $W_R\cup W_B$ in at least one vertex.

We now recolour the graph and show that the resulting colouring does not contain a monochromatic copy of $K_n$. We may assume, without loss of generality, that all edges in $V_R\cup V_B$ are present, since losing edges will only help prevent a monochromatic $K_n$ occurring. Let $\{x_R,y_R\}=V_R\sm (W_R\cup W_B)$ and $\{x_B,y_B\}=V_B\sm (W_R\cup W_B)$. 
\begin{itemize}
\item If $n=4$ and $V_R\cap V_B \neq \emptyset$ (i.e.~$|W_R|=1$), colour one edge between $W_R$ and $W_B$ red, and the other one blue. 
	Otherwise, colour the edges between $W_R$ and $W_B$ so that 
	for every $v\in W_R$ there are $w_r,w_b\in W_B$ such that $vw_r$ is red and $vw_b$ is blue, and for every $v\in W_B$ there are $w_r,w_b\in W_R$ such that $vw_r$ is red and $vw_b$ is blue.\footnote{This is clearly possible if $\abs{W_R}, \abs{W_B}\geq 2$, i.e.~if $V_R\cap V_B = \emptyset$ or $n\geq 5$.}	
\item For all $x\in W_R$, $y\in V_R$, and $z\not\in V_R\cup W_B$,  
	colour the edge $xy$ blue and colour the edge $xz$ red.  
\item For all $x\in W_B$, $y\in V_B$, and $z\not\in W_R\cup V_B$, 
	colour the edge $xy$ red and colour the edge $xz$ blue.  
\end{itemize}
This recolouring is illustrated in Figure~\ref{fig:proof} (where we label as black those edges which retain their original colouring). 

\begin{figure} [htbp]
 \centering
\phantom{asdfasdfasdfaasdfasfasdfad}
 \includegraphics[width=0.5\textwidth]{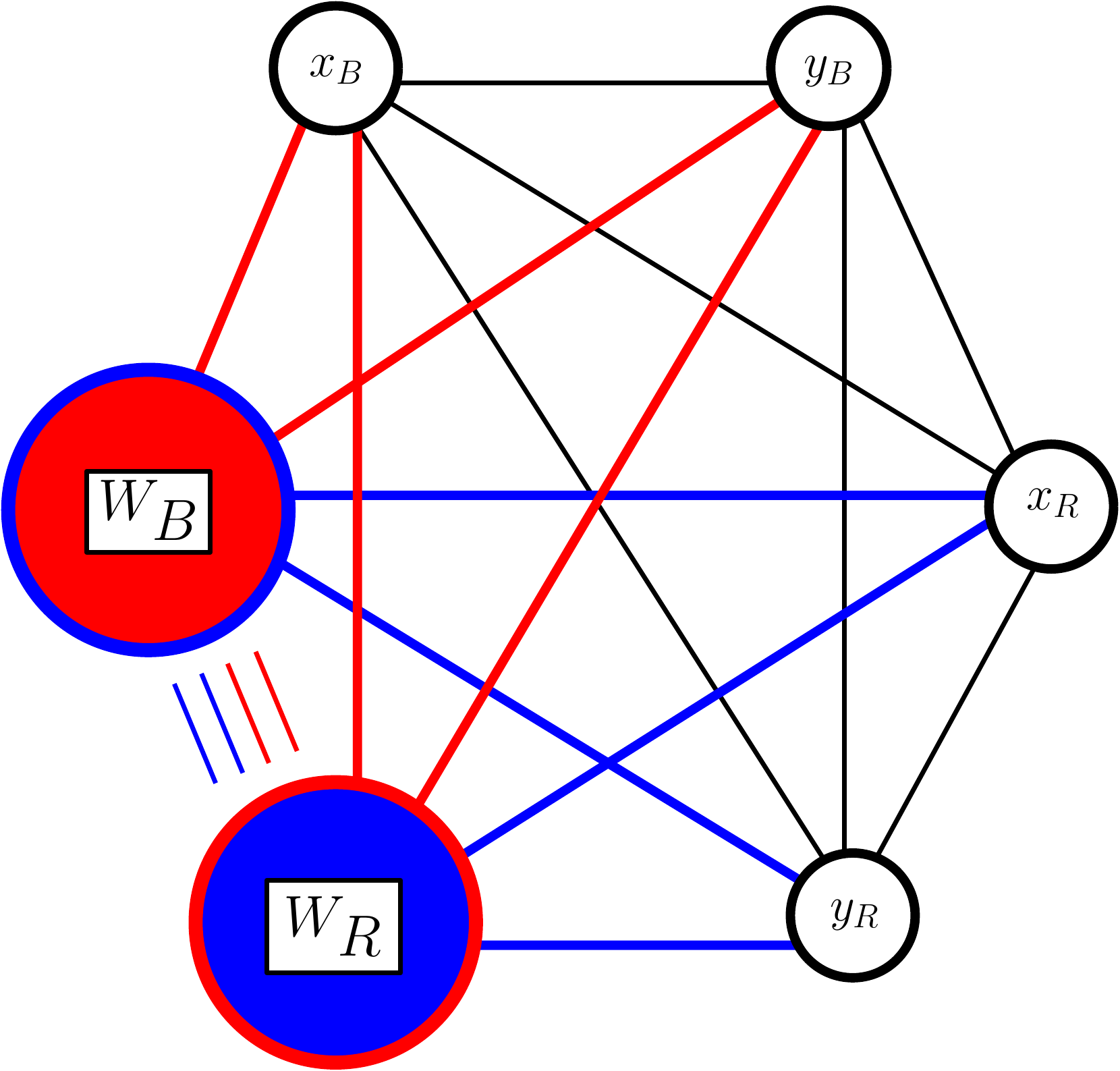}
 \caption{The colouring for the proof of Theorem~\ref{maintheorem}.}
 \label{fig:proof}
 \end{figure}

Note that we only recolour edges incident to $W_R\cup W_B$. 
Therefore, by our choice of $W_R\cup W_B$, any monochromatic 
copy of $K_n$ (after recolouring the edges) must meet $W_R\cup W_B$ 
in at least one vertex. 

Suppose now that a red $K_n$ exists and uses vertices from $W_R$ but not $W_B$. Then it must use just one vertex from $W_R$ and $n-1$ from $G\sm(V_R\cup W_B)$, and hence we have a red $K_n+K_{n-1}$ in the original colouring. If a blue $K_n$ exists and uses vertices from $W_R$ but not $W_B$, then it cannot use any vertices from $\{x_B,y_B\}$ or $G\sm (V_R\cup V_B)$, and can only use at most one vertex from $\{x_R,y_R\}$ (since the edge $x_Ry_R$ remains red as in the original colouring). 
But this contradicts the fact that $\abs{W_R}\leq n-2$. 

Similarly, we can rule out the case that a monochromatic copy of $K_n$ uses vertices from $W_B$ but not $W_R$. Therefore, if there is a monochromatic copy of $K_n$ after recolouring the edges, then it must use vertices from both $W_R$ and $W_B$. Assume first that this copy is red. Since all vertices in $W_B$ are connected to $G\sm(V_R\cup V_B)$ via blue edges, the red copy of $K_n$ must lie entirely inside $V_R\cup V_B$. But then, it can use at most one vertex from $W_R$ and at most one of $\{x_B,y_B\}$. The remaining $n-2$ vertices must come from $W_B$, so we must use all vertices from $W_B$. However, in the case $V_R\cap V_B = \emptyset$ or $n\geq 5$, 
every vertex in $W_R$ sees at least one vertex of $W_B$ in blue. 
In the case $n=4$, $\abs{W_R}=1$ and $\abs{W_B}=2$, the two edges between $W_R$ and $W_B$ are of opposite colour, 
and hence, at most one vertex of $W_B$ can contribute to a red $K_4$. 

A similar argument shows that we do not find a blue copy of $K_n$ using vertices from both $W_R$ and $W_B$. We have therefore constructed a colouring of $G$ which has no monochromatic $K_n$, contradicting the original Ramsey property of $G$ and concluding the proof.
\end{proof}

\section{Ramsey equivalence of $K_3$}\label{sec-triangle}
In this section we give a proof of Theorem~\ref{carly+tibor}, a result of Szab\'{o} and Bodkin~\cite{Bo:2015}. We need to show that, if $G\to K_3$ and $G\nrightarrow K_3+K_2$, then $K_6\subset G$.

\begin{proof}[Proof of Theorem~\ref{carly+tibor}]
Let $G$ be a graph which is Ramsey for $K_3$ and not Ramsey for $K_3+K_2$, and fix some colouring of $G$ with no monochromatic $K_3+K_2$. We first show that $G$ must possess both a red $K_3$ and a blue $K_3$. 

Without loss of generality, there is a red $K_3$, say on vertex set $V_R=\{x_R,y_R,z_R\}$. We now recolour the edges $x_Ry_R$ and $x_Rz_R$ blue, and colour all the edges from $x_R$ to $G\sm V_R$ red. 
It is now straightforward that a blue copy of $K_3$ must be a blue copy in the original colouring, and that a red copy of $K_3$ forces either a monochromatic 
copy of $K_3+K_2$ in the original colouring, or it uses the edge $y_Rz_R$ and single new vertex, say $v_R$. 
In this case, we recolour once again in the following way, as indicated in Figure~\ref{fig:k4}. 
\begin{figure} [htbp]
 \centering
\phantom{asdfasdfasdfaasdfasfasdfad}
 \includegraphics[width=0.3\textwidth]{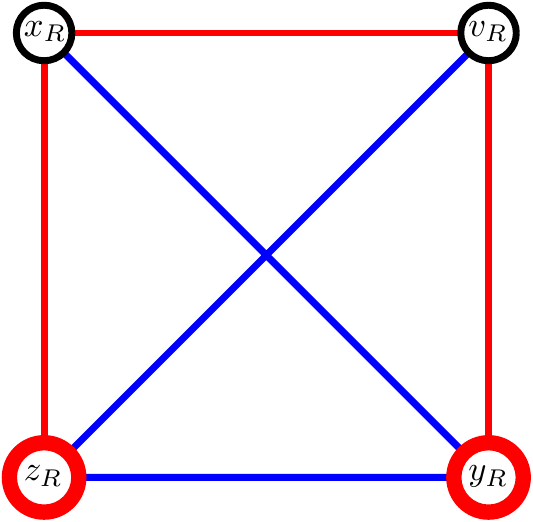}
  \caption{}
 \label{fig:k4}
  \end{figure}
  
We colour the three-edge path $(z_R,x_R,v_R,y_R)$ red, and the complement in $V_r\cup\{v_R\}$ blue. 
Furthermore, we colour all edges between $\{z_R,y_R\}$ and $G-(V_r\cup\{v_R\})$ red. 
As before, if there is now a blue $K_3$, then it cannot use either of the vertices $y_R$ or $z_R$, 
and hence it must have been already present in the original colouring of $G$. 
Otherwise, a red $K_3$ must use exactly two vertices from $\{x_R,v_R,y_R,z_R\}$.  
In particular, we have a red $K_2$ that is either disjoint from $\{x_R,y_R,z_R\}$ or $\{v_R,y_R,z_R\}$, 
and hence a red $K_3+K_2$ in the original colouring. 

We have shown that there must be, in our coloured graph $G$, a red $K_3$, say on $V_R$, 
and a blue $K_3$, say on $V_B$. 
We now show that we can assume that $V_R$ and $V_B$ are disjoint. 

Suppose that our original choices are not, so that $\abs{V_R\cup V_B}=5$. Suppose $V_R\cap V_B=\{x\}$ and $V_R=\{x,y_R,z_R\}$ and $V_B=\{x,y_B,z_B\}$. Clearly, any edges between $\{y_R,z_R\}$ and $G\sm(V_R\cup V_B)$ must be red. If their neighbourhoods intersect in $G\sm (V_R\cup V_B)$ we have found another red $K_3$, entirely disjoint from $V_B$, and we may proceed. Otherwise, we may assume that the neighbourhoods of $y_R$ and $z_R$ in $G\sm (V_R\cup V_B)$ are disjoint. 
Similarly, we can assume that the neighbourhoods of $y_B$ and $z_B$ in $G\sm (V_R\cup V_B)$ are disjoint. 
We now colour the edges incident to $V_R\cup V_B$ as indicated in Figure~\ref{fig:5v}.
\begin{figure} [htbp]
 \centering
\phantom{asdfasdfasdfaasdfasfasdfad}
 \includegraphics[width=0.5\textwidth]{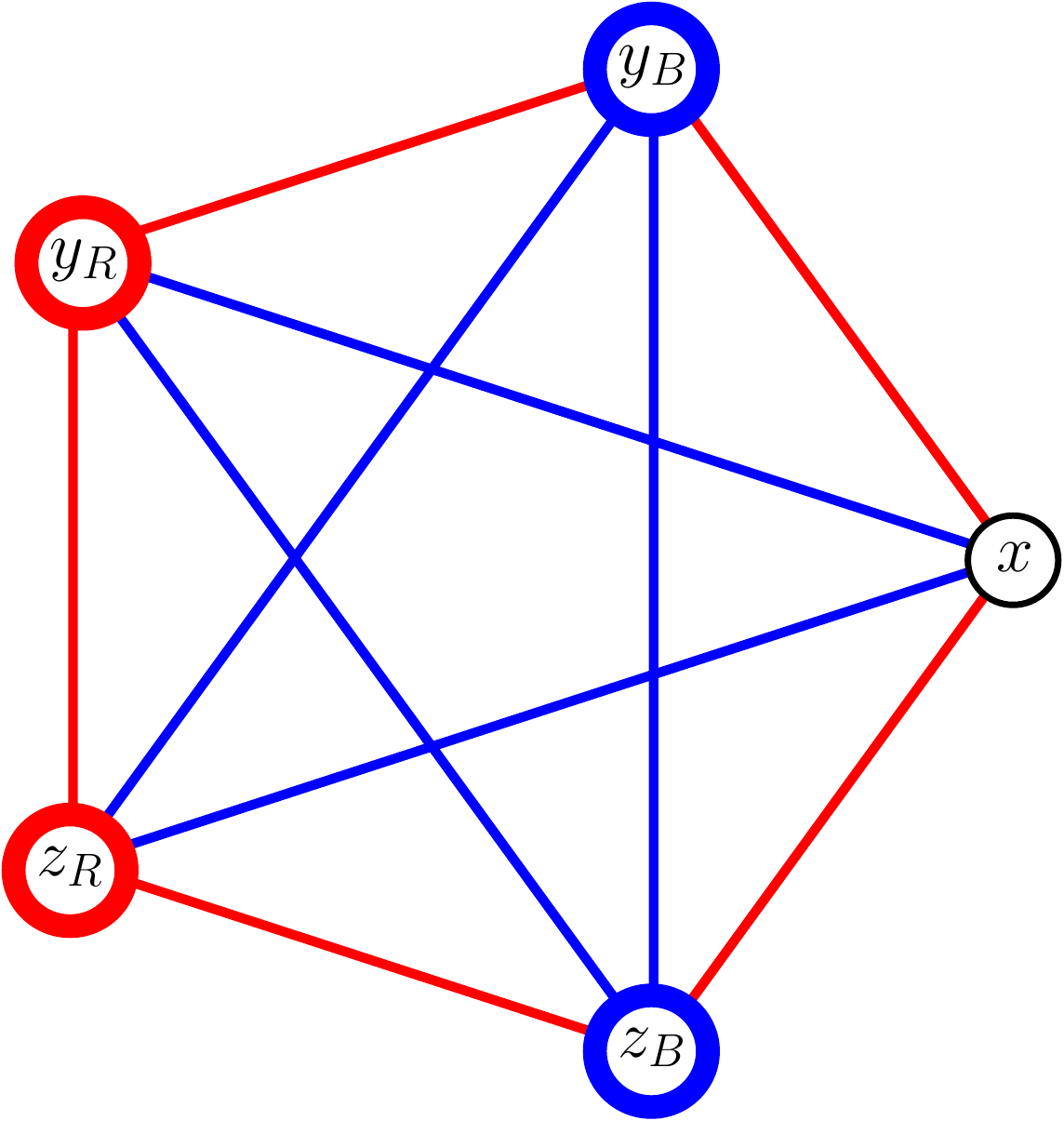}
  \caption{The recolouring when $V_R\cap V_B=\{x\}$. The edges between $\{y_R,z_R\}$ and $G-(V_R\cup V_B)$ are red, the edges between $\{y_B,z_B\}$ and $G-(V_R\cup V_B)$ are blue.}
 \label{fig:5v}
  \end{figure}
 Since $G\to K_3$, there must be a monochromatic copy of $K_3$ after recolouring. 
 Furthermore, it must intersect $V_R\cup V_B$ in exactly two vertices, since the original colouring would contain a monochromatic $K_3+K_2$ otherwise. 
If it is a red $K_3$, say, then it must therefore use $y_R$, $z_R$, and a single vertex from $G\sm (V_R\cup V_B)$, which contradicts the fact that their neighbourhoods are disjoint as discussed above, and we argue similarly if we have found a blue $K_3$. 

We may therefore assume that we have produced two disjoint sets, $V_R$ and $V_B$, each of which spans a red and blue $K_3$ respectively. 

Suppose first that there are two vertex-disjoint edges missing from $V_R\cup V_B$. We then recolour the edges incident to 
$V_R\cup V_B$ as in Figure~\ref{fig:k62missing} (where, as usual, a red (blue) vertex represents the fact that the edges between that vertex and $G\sm (V_R\cup V_B)$ are coloured red (blue)). 
\begin{figure} [htbp]
 \centering
\phantom{asdfasdfasdfaasdfasfasdfad}
 \includegraphics[width=0.5\textwidth]{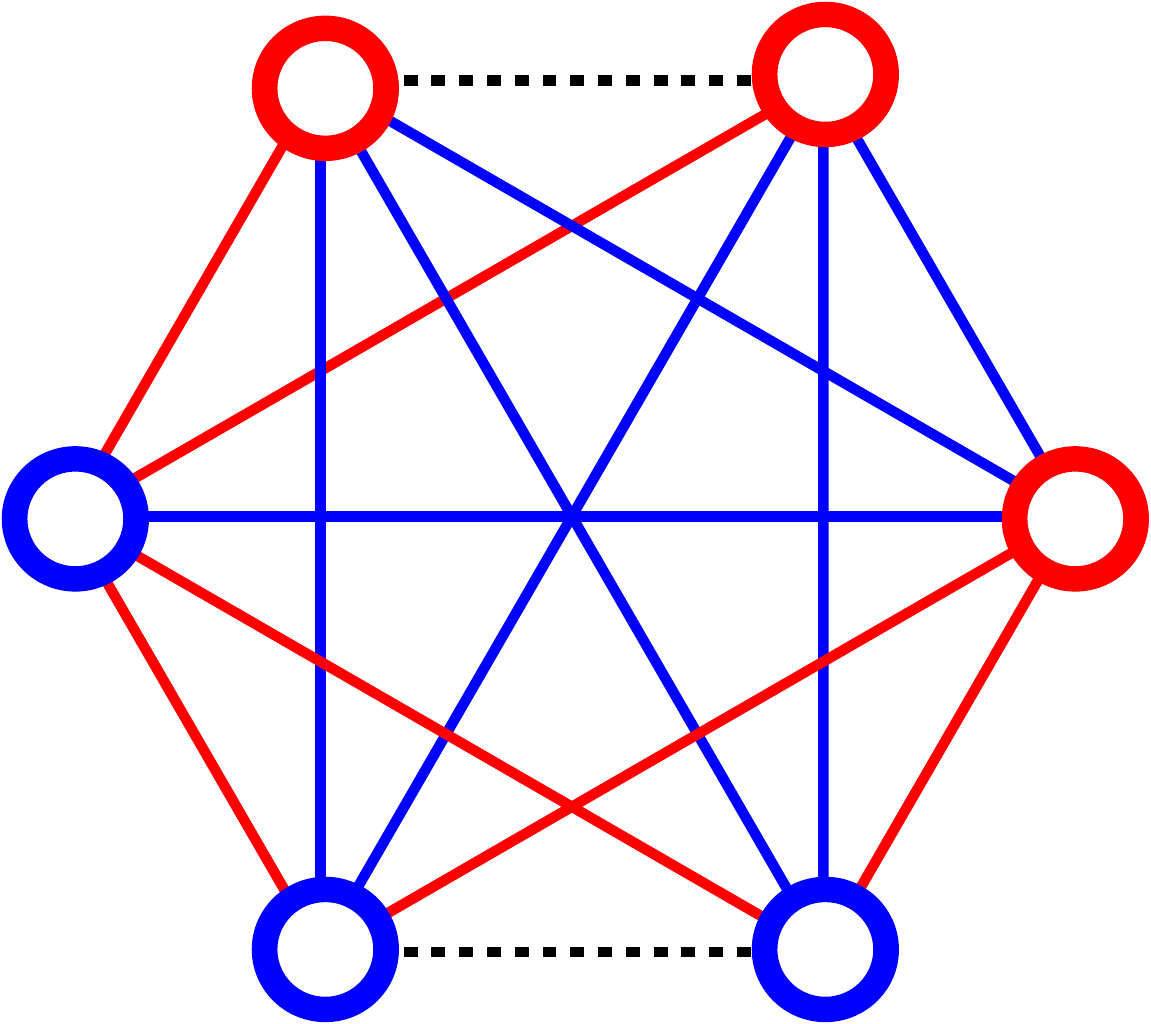}
  \caption{}
 \label{fig:k62missing}
  \end{figure}
It is easy to check that this colouring of $V_R\cup V_B$ contains no monochromatic $K_3$. 
Moreover, there are no blue edges between blue vertices, and, vice versa, no red edges between red vertices. 
It follows that a monochromatic copy of $K_3$ in this recoloured $G$ must use at least two vertices from $G\sm (V_R\cup V_B)$, which would create a monochromatic $K_3+K_2$ in the original colouring of $G$, a contradiction.

We may suppose, therefore, that there is a vertex, without loss of generality say $x_R\in V_R$, such that every missing edge in $V_R\cup V_B$ is adjacent to $x_R$. Furthermore, we may suppose that at least one edge is missing, or else we have a $K_6$ in $G$ as required. Let $x_Rx_B$ be some missing edge, where $x_B\in V_B$. 

Assume first that there is a vertex, say $w$, in $G\sm (V_R\cup V_B)$ that has at least five neighbours in $V_R\cup V_B$. 
If it is adjacent to every vertex of $(V_R\cup V_B)\sm\{x_R\}$ then this creates a $K_6$, as required. 
Hence, we can assume that $wx_R$ is an edge in $G$. 
Furthermore, all edges between $w$ and $V_R$ (if present in $G$) must be red, 
and all edges between $w$ and $V_B$ must be blue (as otherwise they create a monochromatic copy 
of $K_3+K_2$ in the original colouring). 

Suppose that $w$ is adjacent to every vertex of $V_R$ and to two vertices of $V_B$, say $a$ and $b$, and that the edge $wc$ is missing. 
If either of the edges $x_Ra$ or $x_Rb$ is missing, then by considering $\{w,x_R,y_R\}\cup V_B$ we have a similar situation as above -- namely, 
disjoint vertex sets of a red and a blue copy of $K_3$ with two vertex disjoint edges missing, and we are done. 
Otherwise, we have a $K_6$ in $\{w,x_R,y_R,z_R,a,b\}$. 
Suppose now that $w$ is adjacent to every vertex of $V_B$ and $x_R$ and some other vertex of $V_R$, say $a$, and the edge $wb$ is missing, where $b\in V_R$. 
As above, we are now done by considering $V_R\cup \{w,x_B,y_B\}$, since $wb$ and $x_Bx_R$ are two independent edges missing. 

For the remainder of the argument, we may therefore assume that 
every vertex of $G\sm (V_R\cup V_B)$ has at most four neighbours in $V_R\cup V_B$.
We now describe a recolouring of the edges incident to $V_R\cup V_B$ such that there is 
no monochromatic $K_3$ that uses at least two vertices from $V_R\cup V_B$. 
Recolour the interior edges of $V_R\cup V_B$ as in Figure~\ref{fig:k6missing}. 
\begin{figure} [htbp]
 \centering
\phantom{asdfasdfasdfaasdfasfasdfad}
 \includegraphics[width=0.5\textwidth]{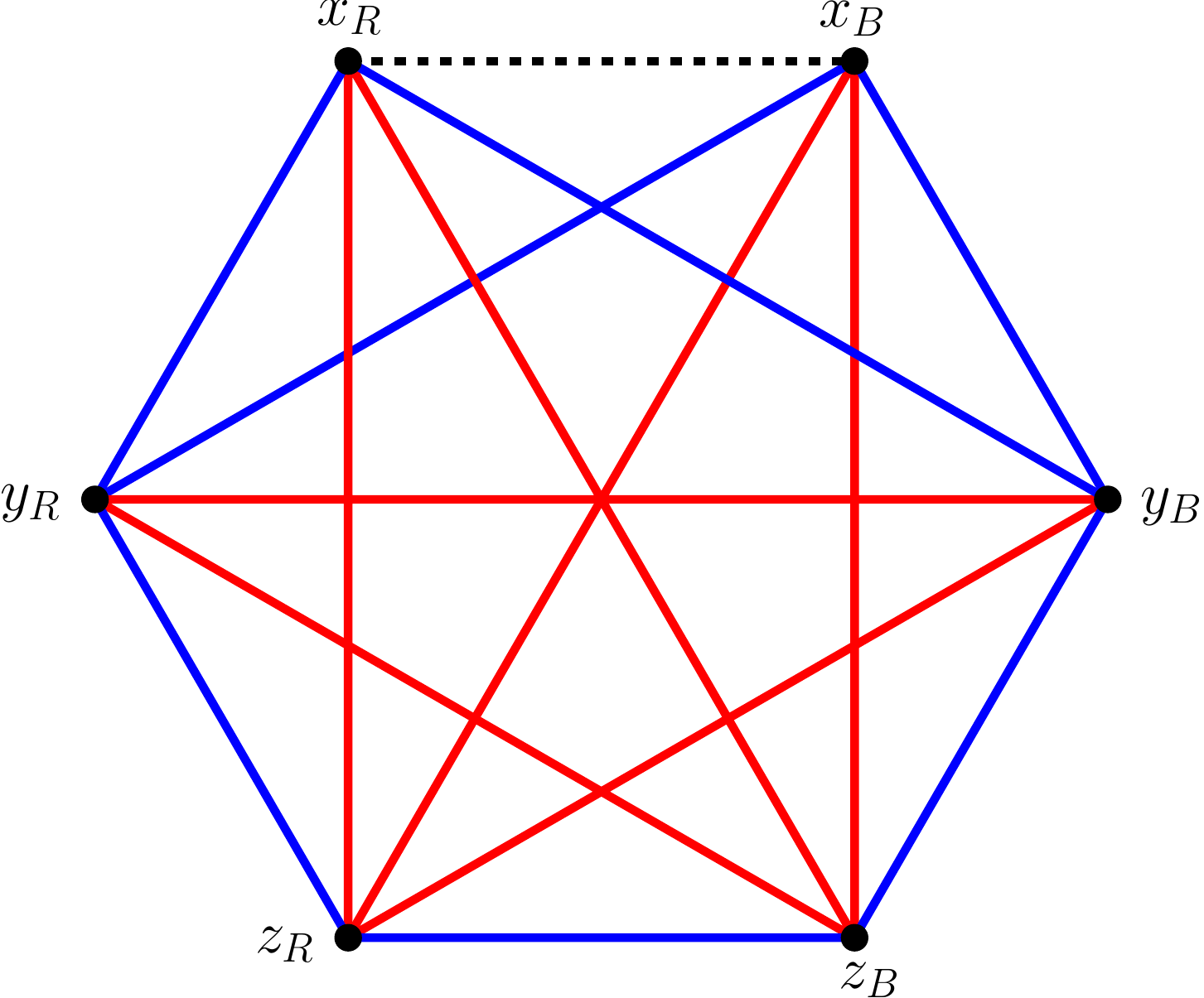}
  \caption{}
 \label{fig:k6missing}
  \end{figure}
Let now $w\in G\sm(V_R\cup V_B)$ and let $N_w\se V_R\cup V_B$ be any set of four vertices containing $N(w)\cap(V_R\cup V_B)$. 
Then, either $(1)$, $\{x_R,x_B\}\not\se N_w$ and we see a red copy of the three-edge path $P_3$ and 
a blue copy of $P_3$ in $N_w$, 
or $(2)$, $\{x_R,x_B\}\se N_w$, say $N_w=\{a,b,x_R,x_B\}$, for some $a,b$, and we 
see a monochromatic copy of $C_4$ or a monochromatic star $K_{1,3}$ with $a$ being the centre of the star. 

In case $(1)$, say $(a,b,c,d)$ forms the red $P_3$ in $N_w$, then we colour the edges $wb$ and $wc$ blue, and the edges $wa$ and $wd$ red (if present in $G$). 
In case $(2)$, we colour the edge $wa$, $wx_R$ and $wx_B$ the opposite colour of $ax_R$ and $wb$ the same colour as $ax_R$.  

Note first that the colouring of $V_R\cup V_B$ does not contain a monochromatic triangle. 
Furthermore, it is evident that we do not create a monochromatic triangle 
on vertices $w,x,y$ with $x,y\in V_R\cup V_B$ and $w\not \in V_R\cup V_B$, 
since no such $w$ sees both vertices of a red edge in red nor both vertices of a blue edge in blue.

However, since $G\to K_3$, there must be an edge $vw$ with $v,w\not\in V_R\cup V_B$, which creates a monochromatic $K_3+K_2$ 
in the original colouring, a contradiction. 
\end{proof}

\section{Further remarks}
We have shown that $K_n$ and $K_n+K_{n-1}$ are Ramsey equivalent for $n\geq 4$. Furthermore, we have seen that $K_6$ is the only obstruction to the Ramsey equivalence of $K_3$ and $K_3+K_2$, 
i.e.~any graph $G$ that satisfies $G\to K_3$ and $G\nrightarrow K_3+ K_2$ must contain $K_6$ as a subgraph. 

The only pairs of graphs $(H_1,H_2)$ known to be Ramsey equivalent are of the form $H_1\cong K_n$ and $H_2\cong K_n+H_3$, where $H_3$ is a graph of clique number less than $n$. 
Furthermore, it is known (\cite{FoGrLiPeSz:2014} and \cite{NeRo:1976}) that the only connected graph that is Ramsey equivalent to $K_n$ is the clique $K_n$ itself. 

It is an open question, first posed in \cite{FoGrLiPeSz:2014}, whether there are two connected non-isomorphic graphs $H_1$ and $H_2$ that are Ramsey equivalent. It follows from \cite{NeRo:1976} that, if such a pair exist, they must have the same clique number. In \cite{AxRoUe:2015} it is shown that they must also have the same chromatic number, under the assumption that one of the two graphs satisfies an additional property, called {\em clique-splittability}. 

To tackle problems on Ramsey equivalence, a weaker concept was proposed by Szab\'o \cite{Sz:2015}. We will first introduce some necessary notation.  
We say that $G$ is Ramsey minimal for $H$ if $G$ is Ramsey for $H$ and no proper subgraph of $G$ is Ramsey for $H$. Denote by $\mathcal{M}(H)$ the set of all graphs which are Ramsey minimal for $H$, and by $\mathcal{R}(H)$ the set all graphs which are Ramsey for $H$. Finally, let $\cD(H_1,H_2):=(\cM(H_1)\sm \mathcal{R}(H_2))\cup (\cM(H_2)\sm \mathcal{R}(H_1))$ be the class of graphs $G$ that are Ramsey minimal for $H_1$, but which are not Ramsey for $H_2$, or vice versa. Equivalently, $\cD(H_1,H_2)$ is the set of minimal obstructions to the Ramsey equivalence of $H_1$ and $H_2$.

In particular, $H_1$ and $H_2$ are Ramsey equivalent if and only if $\cD(H_1,H_2)=\emptyset$. We say that $H_1$ and $H_2$ are Ramsey close, denoted by $H_1\wre H_2$, if $\cD(H_1,H_2)$ is finite. We stress that this is not an equivalence relation: reflexivity and symmetry are trivial, but transitivity does not hold, since every graph containing at least one edge is close to $K_2$.

Two graphs may be Ramsey close in a rather trivial sense if $\cM(H_1)$ and $\cM(H_2)$ are both finite, or if $H_2\subset H_1$ and $\cM(H_2)$ is finite. Graphs such that $\cM(H)$ is finite are known as Ramsey-finite graphs. The class of Ramsey-finite graphs has been studied quite intensively; see, for example, \cite{BuErFaRiSc:1982} for some results and further references. In particular, it has been shown that the only Ramsey-finite graphs are disjoint unions of stars.

If one wishes to prove that two graphs are Ramsey equivalent, a possible first step is to show that the two graphs are Ramsey close. Szab\'o \cite{Sz:2015} has posed the following weaker version of the open problem mentioned earlier.

\begin{quest}\label{qu1}
Is there a pair of non-isomorphic, Ramsey-infinite, connected graphs which are Ramsey close?
\end{quest}

We suspect that the answer to Question \ref{qu1} is negative, even with this weakening of the notion of Ramsey equivalence.

Ne\v{s}et\v{r}il and R\"{o}dl \cite{NeRo:1978} proved that if $\omega(H)\geq 3$ then there exist infinitely many Ramsey-minimal graphs $G\in\cM(H)$ such that $\omega(H)=\omega(G)$. In particular, it follows that if $\omega(G_1)\geq 3$ and $\omega(G_2)\geq 3$, and $G_1\wre G_2$, then $\omega(G_1)=\omega(G_2)$.

Theorem~\ref{carly+tibor} states that, although $K_3$ and $K_3+K_2$ are not Ramsey equivalent, they are Ramsey close. Indeed, the only graph $G$ that is Ramsey minimal for $K_3$ and satisfies $G \nto K_3+K_2$ is $K_6$ itself. 
This is the only example of a pair of Ramsey-infinite graphs which are Ramsey close but not Ramsey equivalent that we know of. 
In this case, $\abs{\mathcal{D}(K_3, K_3+K_2)}=1$. We pose the following.

\begin{quest}
For any integer $k\geq 2$, is there a pair of Ramsey-infinite graphs $H_1$ and $H_2$ such that $\abs{\mathcal{D}(H_1,H_2)}=k$?
\end{quest}

An affirmative answer, which we believe to exist, would in particular imply the following conjecture. 

\begin{conjecture}
There are infinitely many pairs of Ramsey-infinite graphs which are Ramsey close but not Ramsey equivalent.
\end{conjecture}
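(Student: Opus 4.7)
The plan is to deduce the conjecture from the main theorem of the paper by exhibiting, for each $n\geq 4$, a natural pair of Ramsey-infinite graphs that is Ramsey close but not Ramsey equivalent. The candidate pair is
\[H_1^{(n)} := K_n+K_{n-1}, \qquad H_2^{(n)} := K_n+2K_{n-1}.\]
Both graphs contain $K_n$ and therefore are not disjoint unions of stars, so both are Ramsey-infinite; as $n$ varies the pairs are visibly distinct.

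Non-equivalence is essentially automatic. By Theorem~\ref{maintheorem}, $\mathcal{R}(H_1^{(n)})=\mathcal{R}(K_n+K_{n-1})=\mathcal{R}(K_n)$, while the result of \cite{FoGrLiPeSz:2014} quoted after Theorem~\ref{maintheorem} states $K_n\nre K_n+2K_{n-1}$; combining these gives $H_1^{(n)}\nre H_2^{(n)}$, so in particular $\cD(H_1^{(n)},H_2^{(n)})\ne\emptyset$. Since $H_1^{(n)}\se H_2^{(n)}$, we also have $\mathcal{R}(H_2^{(n)})\se\mathcal{R}(H_1^{(n)})$, so $\cM(H_2^{(n)})\se\mathcal{R}(H_1^{(n)})$ and the second piece in the definition of $\cD$ vanishes. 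Using further the equality $\cM(H_1^{(n)})=\cM(K_n)$, a consequence of $H_1^{(n)}\re K_n$, we obtain
\[\cD(H_1^{(n)},H_2^{(n)}) \;=\; \cM(K_n)\sm\mathcal{R}(K_n+2K_{n-1}).\]

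The task thus reduces to the following strong form of Ramsey stability: \emph{only finitely many Ramsey-minimal graphs $G$ for $K_n$ satisfy $G\nto K_n+2K_{n-1}$.} My plan for this is to refine the recolouring toolkit of Sections~\ref{sec-stability}--\ref{sec-proof} into a stability-type statement: rather than deriving a contradiction from the absence of a monochromatic $K_n+K_{n-1}$, one extracts quantitative structural information from the absence of a \emph{second} disjoint monochromatic $K_{n-1}$. Concretely, I would first attempt an analogue of Lemma~\ref{lem-2intersection} asserting that, in a Ramsey-minimal $G\in\cM(K_n)$ equipped with a colouring that avoids any monochromatic $K_n+2K_{n-1}$, any two monochromatic $K_n$'s of the same colour must share almost all their vertices; I would then use the recolouring pattern of the proof of Theorem~\ref{maintheorem} to force every monochromatic $K_n$ to cluster around a single bounded-size configuration. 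By Ramsey minimality, this would bound $|V(G)|$, leaving only finitely many candidates.

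The main obstacle is that the recolourings developed in the paper are tailored to derive outright contradictions rather than stability statements, and converting them into quantitative bounds on the size of $G$ will likely require a genuinely new idea (in particular, a quantitative form of Lemma~\ref{lem:big} sensitive to the number of disjoint monochromatic $K_{n-1}$'s). Should this direct approach falter, a useful fallback is to settle the case $n=4$ by ad hoc arguments and treat it as a model suggesting the correct general statement. A complementary strategy is to settle affirmatively the preceding question, which asks, for each $k\geq 2$, for a pair of Ramsey-infinite graphs with $|\cD|=k$: any such infinite sequence would immediately yield infinitely many pairwise distinct pairs and hence imply the conjecture.
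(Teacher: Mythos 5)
The statement you are addressing is not a theorem of the paper at all: it is stated as an open conjecture in the concluding section, offered without proof (the authors merely observe that an affirmative answer to the preceding question about $\abs{\cD(H_1,H_2)}=k$ would imply it). So there is no proof in the paper to compare against, and the real question is whether your proposal itself constitutes a proof. It does not. Your reductions are correct as far as they go: for $n\geq 4$ the pair $H_1^{(n)}=K_n+K_{n-1}$, $H_2^{(n)}=K_n+2K_{n-1}$ consists of Ramsey-infinite graphs (neither is a disjoint union of stars), they are not Ramsey equivalent by Theorem~\ref{maintheorem} combined with the result of \cite{FoGrLiPeSz:2014}, and since $H_1^{(n)}\se H_2^{(n)}$ and $\cM(H_1^{(n)})=\cM(K_n)$ one indeed has $\cD(H_1^{(n)},H_2^{(n)})=\cM(K_n)\sm\mathcal{R}(K_n+2K_{n-1})$.

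The genuine gap is the finiteness of that set, which is the entire content of ``Ramsey close'' and which you only sketch as a plan. The claim that all but finitely many Ramsey-minimal graphs for $K_n$ are Ramsey for $K_n+2K_{n-1}$ is an open problem, and there is no evidence in the paper's machinery that it is within reach: Lemma~\ref{lem:big} and the recolourings of Section~\ref{sec-proof} are engineered to produce a single extra monochromatic $K_{n-1}$ and to derive outright contradictions, not to bound the size or number of minimal obstructions, and by \cite{FoGrLiPeSz:2014} the equivalence genuinely fails once a second disjoint $K_{n-1}$ is demanded, so no contradiction-based argument can apply to all large $G$. The only known instance of a finite, nonempty obstruction set is Theorem~\ref{carly+tibor} for $K_3$ versus $K_3+K_2$, proved by a delicate case analysis specific to triangles; nothing analogous is known for $n\geq 4$, and you acknowledge yourself that a ``genuinely new idea'' would be required. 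Your fallback -- answering the preceding question affirmatively for every $k\geq 2$ -- is likewise just a restatement of an open problem that the paper already notes would imply the conjecture. In short, the proposal correctly identifies a plausible family of witnesses and correctly reduces the problem, but the crucial finiteness step is missing, so the conjecture remains unproved.
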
 

We close this paper with the following simpler question, which one may be able to answer negatively by adapting the methods of \cite{FoGrLiPeSz:2014}.

\begin{quest}
Are $K_n$ and $K_n+K_n$ Ramsey close?
\end{quest}

\end{document}